\theoremstyle{plain}
\newtheorem{thm}{Theorem} 
\newtheorem{prop}[thm]{Proposition}
\newtheorem{lem}[thm]{Lemma}
\newtheorem{cor}[thm]{Corollary}
\numberwithin{thm}{section}
\theoremstyle{definition}
\newtheorem{rem}[equation]{Remark}
\newtheorem*{ac}{Acknowledgements} 
\newcommand{\MyAlphabet}[1]{\@tfor\Ch@r:=#1\do{
	\expandafter\edef\csname bb\Ch@r\endcsname{\noexpand\mathbb{\Ch@r}}
	\expandafter\edef\csname cal\Ch@r\endcsname{\noexpand\mathcal{\Ch@r}}
	\expandafter\edef\csname frak\Ch@r\endcsname{\noexpand\mathfrak{\Ch@r}}
	\expandafter\edef\csname scr\Ch@r\endcsname{\noexpand\mathscr{\Ch@r}}
}}
\newcommand{\MyMathOperators}[1]{\@for\op:=#1\do{
	\expandafter\edef\csname\op\endcsname{\noexpand\mathop{\noexpand\mathrm{\op}}\nolimits}
}}
\let\Re\relax
\let\Im\relax
\renewcommand{\epsilon}{\varepsilon}
\renewcommand{\bar}[1]{\overline{#1}}
\renewcommand{\tilde}[1]{\widetilde{#1}}
\DeclarePairedDelimiter{\skakko}{\lparen}{\rparen} 
\DeclarePairedDelimiter{\mkakko}{\lbrace}{\rbrace} 
\DeclarePairedDelimiterX{\set}[2]{\lbrace}{\rbrace}{#1\,\delimsize\vert\,#2} 
\DeclareSymbolFont{cyrletters}{OT2}{wncyr}{m}{n}
\DeclareMathSymbol{\Sha}{\mathalpha}{cyrletters}{"58}
\title{Lower bound for the 2-adic valuations of central $L$-values of elliptic curves with complex multiplication}
\author{Keiichiro Nomoto}
\address{FACULTY OF MATHEMATICS, KYUSHU UNIVERSITY, MOTOOKA 744, NISHI-KU FUKUOKA 819-0395, JAPAN}
\email{nomotokeiichiro@gmail.com}
\date{}
\keywords{elliptic curve, $L$-function, $2$-adic valuation}
\subjclass[2010]{Primary 11G05; Secondary 11G15, 11G40}
\begin{document}

\begin{abstract}
    Let $E_{-D}$ be the elliptic curve $y^2=x^3+Dx$
    defined over $K=\bbQ(i)$ for $D\in K$ which is coprime to $2$.
    In this paper, we give a lower bound for the $2$-adic valuation of the
    algebraic part of the central value of Hecke $L$-function associated to $E_{-D}$.
\end{abstract}

\maketitle

\section{Introduction}\label{sec:Introduction}

Let $E$ be an elliptic curve defined over an imaginary quadratic field $K$
with complex multiplication by the integer ring $\calO_K$
and let $L(\psi, s)$ be the Hecke $L$-function attached to $E$.
For a CM period $\Omega$, it is known that the value
\begin{align}
    \frac{L(\psi, 1)}{\Omega}
\end{align}
belongs to $K$. Calculating the $p$-adic valuation of this value is essentially
equivalent to computing the order of the $p$-primary part of the Tate--Shafarevich group.
Although it is difficult to calculate it for a general prime number $p$,
it is slightly easier to compute the $2$-adic valuation
if the elliptic curve $E$ is a twist of some elliptic curve.

Several works are giving lower bounds of the $p$-adic valuations
of various families of elliptic curves with complex multiplication
when $p=2, 3$.
First, we give some results for elliptic curves of the form $y^2=x^3-Dx$.
Zhao has given a lower bound of the 2-adic valuations
when $D=(\pi_1\cdots \pi_n)^2\in\bbZ[i] \ (\pi_i\equiv 1\bmod 4)$
is the square of the product of distinct Gaussian primes in \cite{Zhao:1997}
and when $D=(p_1\cdots p_m)^2 \ (p_i\equiv 1\bmod 8)$
is the square of the product of distinct rational primes in \cite{Zhao:2001}.
He has also given it for $D=4(\pi_1\cdots\pi_n)^2
\ (\pi_i\equiv 1\bmod 2+2i)$ in \cite{Zhao:2003}.
Qiu and Zhang \cite{QiuZhang:2002} have given a lower bound of the 2-adic valuations
for $D=\pi_1\cdots\pi_n,\ (\pi_1\cdots \pi_r)^2\pi_{r+1}\cdots \pi_n \ (\pi_i\equiv 1\bmod 4)$.
In the latter case $D=(\pi_1\cdots \pi_r)^2\pi_{r+1}\cdots \pi_n$,
however, no proof has been given.
Next, we give some results for elliptic curves of the form $y^2=x^3-2^43^3D$.
Qiu and Zhang \cite{Qiu:2002} have given a lower bound of the 3-adic valuations
when $D=(\pi_1\cdots\pi_n)^2\in\bbZ[\omega] \ (\pi_i\equiv 1\bmod 6)$
is the square of the product of distinct Eisenstein primes.
Qiu \cite{Qiu:2003} also has given it for $D=(\pi_1\cdots \pi_n)^{4} \ (\pi_i\equiv 1\bmod 6)$
and for $D=(\pi_1\cdots \pi_n)^3 \ (\pi_i\equiv 1\bmod 12)$.
Kezuka \cite{Kezuka:2021} has given a lower bound of the 3-adic valuations
for the elliptic curve $y^2=x^3-2^43^3D^2$ defined over $\bbQ$
when $D$ is a cube-free integer with $(D, 3)=1$.
There are also some studies on CM elliptic curves
with these $j$-invariants being not 0 or 1728
(\textit{cf}. \cite{Coates:2015}, \cite{Coates:2014}, \cite{Choi:2019}).
\\


Let $K=\bbQ(i)$. We consider the elliptic curve $E_{-D}: y^2=x^3+Dx$
defined over $K$ for $D\in K$ which is coprime to $2$.
We write the Hecke character associated to $E_{-D}$ as $\psi_{-D}$.
We give a lower bound for the $2$-adic valuation
of the algebraic part of $L(\overline{\psi_{-D}}, 1)$.
The following theorem is the main result. 

\begin{thm}\label{thm:mainthm}
    Suppose $D\in \calO_K$, quartic-free, and is congruent to $1$ modulo $2+2i$.
    Let $\psi_{-D}$ be the Hecke character associated to the elliptic curve $E_{-D}: y^2=x^3+Dx$ defined over $K$.
    We define $L_{2}(\overline{\psi_{-D}}, s)$ to be the Hecke $L$-function
    of $\overline{\psi_{-D}}$ omitting the Euler factor corresponding
    to the prime $(1+i)\calO_K$.
    If $D\notin K^{\times 2}$, then we have
    \begin{align}
    v_2\skakko*{\dfrac{L_{2}(\overline{\psi_{-D}}, 1)}{\Omega}}\geq \dfrac{r(D)-2}{2},
    \end{align}
    where $r(D)$ is the number of distinct primes in $D$,
    $\Omega=2.6220575\dots$ is the least positive real element
    of the period lattice of $E_1: y^2=x^3-x$
    and $v_2$ is the 2-adic valuation of $\overline{\bbQ_2}$
    normalized so that $v_2(2)=1$.
\end{thm}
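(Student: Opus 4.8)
The plan is to reduce the inequality to a single $(1+i)$-adic divisibility statement for a finite character sum attached to $D$, and then to prove that divisibility by induction on $r(D)$, peeling one prime factor of $D$ off at a time.

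\textbf{From the $L$-value to a character sum.}
The curve $E_{-D}$ is the quartic twist of $E_{-1}:y^{2}=x^{3}+x$ by $D$ (and $E_{-1}$ itself differs from $E_{1}$ by a twist by a unit, which is why the real period $\Omega$ of $E_{1}$ is the correct normalizer), so the Hecke character factors as $\psi_{-D}=\left(\tfrac{\cdot}{D}\right)_{4}\psi_{-1}$, where $\left(\tfrac{\cdot}{D}\right)_{4}$ is the quartic residue symbol modulo $D$; this symbol is well defined because $D$ is prime to $2$ and $D\equiv 1\bmod 2+2i$. Feeding this factorization into Damerell's theorem for $\psi_{-1}$, equivalently into the Eisenstein--Kronecker (Hecke theta-series) expression for the central value, and using the functional equation of the Hecke $L$-function to cut the series down to a finite sum, I would obtain an identity of the shape
\[
\frac{L_{2}(\overline{\psi_{-D}},1)}{\Omega}\;=\;c\cdot\mathcal{S}(D),\qquad
\mathcal{S}(D)=\sum_{x\bmod\mathfrak{m}}\Big(\tfrac{x}{D}\Big)_{4}\,w_{D}(x),
\]
where $\mathfrak{m}$ is a modulus divisible by $D$ and by a fixed power of $1+i$, the weight $w_{D}$ is an explicit elementary function arising from the theta value, and $c$ is an explicit constant built only from the arithmetic of $E_{-1}$ (the rational number $L(\psi_{1},1)/\Omega$ and the Euler factor at $1+i$ removed when passing to $L_{2}$); the $2$-adic valuation of $c$, together with the small cases of the induction, is what produces the term $-2$ in the statement. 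The hypothesis $D\notin K^{\times 2}$ enters already here: it ensures that $\psi_{-D}$ is a genuinely order-$4$ twist of $\psi_{-1}$ rather than a quadratic one (the quadratic regime, for $y^{2}=x^{3}-Dx$, being the subject of Zhao's theorems), so that the displayed formula is the correct one and, in particular, $\mathcal{S}(D)\neq 0$.

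\textbf{Reduction and the induction.}
Granting the formula, it suffices to prove a divisibility of the form $(1+i)^{\,r(D)}\mid\mathcal{S}(D)$ (the two lost powers being the contribution of $c$ and the base cases). Write $D=\pi_{1}^{e_{1}}\cdots\pi_{r}^{e_{r}}$ with $e_{i}\in\{1,2,3\}$. Two observations drive everything: first, $\left(\tfrac{x}{D}\right)_{4}=\prod_{i}\left(\tfrac{x}{\pi_{i}}\right)_{4}^{e_{i}}$ is a product of $r$ factors, each lying in $\mu_{4}\cup\{0\}$ and hence congruent to $1$ modulo $1+i$ wherever it is a unit; second, the Chinese Remainder Theorem splits the modulus $\mathfrak{m}$ and lets one analyse the $x$-sum one prime at a time. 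I would then set up an induction on $r$: fixing $\pi=\pi_{r}$ and $D'=D/\pi^{e_{r}}$ (a product of $r-1$ primes), I would prove a congruence
\[
\mathcal{S}(D)\;\equiv\;(1+i)\,u_{\pi,D'}\,\mathcal{S}(D')\pmod{(1+i)^{\,r+1}},
\]
with $u_{\pi,D'}$ a $2$-adic unit (or $0$), so that the extra factor of $1+i$ gained at each step, combined with the inductive hypothesis for $D'$, yields $(1+i)^{\,r}\mid\mathcal{S}(D)$; the base case $r=1$ amounts to showing $(1+i)\mid\mathcal{S}(\pi)$, which reduces to a short sum of the weight vanishing modulo $1+i$. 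The gain of $1+i$ in the congruence comes from grouping the residues modulo $\pi^{e_{r}}$ into cosets on which the symbol $\left(\tfrac{\cdot}{\pi}\right)_{4}^{e_{r}}$ forces cancellation: one applies quartic reciprocity to put $\left(\tfrac{\cdot}{\pi}\right)_{4}^{e_{r}}$ into a Gauss-sum or root-of-unity shape, isolates the surviving contribution, and identifies the remainder with $(1+i)$ times a unit times the analogous sum for $D'$. The normalization $D\equiv 1\bmod 2+2i$ is exactly what is needed to control the symbol at the prime above $2$ when the two moduli are recombined.

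\textbf{Main obstacle.}
The technical heart, and where I expect essentially all of the difficulty to lie, is the induction step with the correct power of $1+i$ in the error term. The cross terms produced by the Chinese Remainder decomposition do not vanish; they are only highly $(1+i)$-divisible, and showing that their valuation grows with $r(D)$ fast enough to be absorbed into the error of the congruence requires the precise form of the theta-weight $w_{D}$, a careful application of quartic reciprocity at each peeled-off prime, and a separate analysis at the prime $1+i$. Everything else --- Damerell's formula, the explicit value $L(\psi_{1},1)/\Omega$, the $\mu_{4}$-symmetry of $\mathcal{S}(D)$, the base cases --- is comparatively routine. A further point that must be handled honestly is the non-vanishing $\mathcal{S}(D)\neq 0$ (equivalently $L_{2}(\overline{\psi_{-D}},1)\neq 0$), so that the left-hand side of the asserted inequality is finite; this is precisely the role played by the hypothesis $D\notin K^{\times 2}$.
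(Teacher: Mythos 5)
Your first step---expressing $L_{2}(\overline{\psi_{-D}},1)/\Omega$ as a finite character sum $\sum_{c}(c/D)_{4}\,w_{D}(c)$ with a weight of bounded $2$-adic valuation---matches the paper's Section 2, and your instinct to induct on the number of prime factors is also the right one. But the engine you propose for the induction, a term-by-term congruence $\mathcal{S}(D)\equiv(1+i)\,u\,\mathcal{S}(D')\pmod{(1+i)^{r+1}}$ with $D'=D/\pi^{e_{r}}$, is exactly the step you leave unproved, and it is not the mechanism that actually works. The sums $\mathcal{S}(D)$ and $\mathcal{S}(D')$ are taken over different moduli, so their weights are values of $\wp$ at different division points, and there is no direct congruence between them. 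What the paper does instead (Zhao's method) is to sum the finite-sum identity over \emph{all} $2^{r}$ divisors $D_{T}$ of $D$ at once, with one common modulus $\Delta$ (the radical of $D$): the character sums then factor as $\sum_{T}(c/D_{T})_{4}=\prod_{i}\bigl(1+(c/\pi_{i})_{4}^{e_{i}}\bigr)$, each factor contributing valuation at least $1/2$ (or $1$ when $e_{i}=2$), which bounds the valuation of the whole sum $\sum_{T}L_{2\Delta}^{\ast}(\overline{\psi_{-D_{T}}},1)/\Omega$; the top term $T=\{1,\dots,r\}$ is then isolated using the inductive bounds on the proper divisors, the explicit value $L(\overline{\psi_{-1}},1)=\Omega/(2\sqrt{2})$, and the valuations of the Euler factors $1-\overline{\psi_{-D_{T}}}((\pi))/N\pi$, which supply exactly the extra valuation needed to close the induction. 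Moreover, even this works only when all exponents $e_{i}$ are equal (the gain per prime differs between $e_{i}\in\{1,3\}$ and $e_{i}=2$, and the minimum over proper divisors must still reach the claimed bound); the paper's main new contribution is to decompose $D=D_{1}D_{2}D_{3}$ by exponent and run a double (resp.\ triple) induction, one Zhao step per factor. Your single one-prime-at-a-time peeling does not address either of these points.

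A second, more localized error: you assert that the hypothesis $D\notin K^{\times 2}$ guarantees $\mathcal{S}(D)\neq 0$, i.e.\ $L_{2}(\overline{\psi_{-D}},1)\neq 0$. It does not, and no non-vanishing is needed: the inequality $v_{2}(\cdot)\geq(r(D)-2)/2$ is vacuously true when the $L$-value vanishes ($v_{2}=+\infty$), and the tables in Section 4 contain many non-square $D$ with vanishing central value (e.g.\ $D=-7$, $D=-23$). The hypothesis $D\notin K^{\times 2}$ only excludes the case $D=D_{2}^{(m)}$ already treated by Zhao, for which the sharper bound $(2r(D)-3)/2$ holds.
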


\begin{rem}
    When $D\in K^{\times 2}$, Zhao has given the lower bound
    $(2r(D)-3)/2$ \cite[Theorem 1]{Zhao:2003}.
    Note that Zhao uses a period of $E_{4D}$,
    while we use a period $\Omega$ of $E_1$.
\end{rem}

\begin{rem}
    The condition that $D\in \calO_K$, quartic-free and congruent to $1$ modulo $2+2i$ in Theorem \ref{thm:mainthm} is not essential.
    If $D\in K$ is not quartic-free, then we can take $D_0\in \calO_K$ so that quartic-free and $E_{-D}$ is isomorphic to $E_{-D_0}$ over $K$.
    For any $D\in \calO_K$ which is coprime to 2, only one of $\{\pm D, \pm iD\}$ is congruent to 1 modulo $2+2i$.
    For more details, see Section \ref{sec:ExpressFiniteSum}.
\end{rem}

\begin{rem}
    The lower bound of Theorem \ref{thm:mainthm} is expected to be sharp
    in the sense that there exist elliptic curves $E_{-D}$ for which equality holds.
    See the numerical examples in Section \ref{sec:Example}.
\end{rem}


We prove Theorem \ref{thm:mainthm}
combining Theorem \ref{thm:Lvaluation} with Theorem \ref{thm:2Lvaluation}.
Here, Theorem \ref{thm:Lvaluation} deals with the case where all the indices of the primes in $D$ are equal,
and Theorem \ref{thm:2Lvaluation} deals with the other case.
The key of the proof of Theorem \ref{thm:Lvaluation} and Theorem \ref{thm:2Lvaluation}
is to consider not only an elliptic curve $E_{-D}$ for a parameter $D$
but also elliptic curves $E_{-D_T}$ for all divisors $D_T$ of $D$.
Theorem \ref{thm:Lvaluation} is proved
by using the induction on the number of primes dividing $D$.
Such a method is sometimes called Zhao's method.
However, due to technical reasons,
Zhao's method can only be applied to the case
where all the indices of the primes in $D$ are equal.
In order to apply Zhao's method to the other case,
we decompose $D$ into $D_1D_2D_3$,
where $D_i$ is the product of the primes whose indices are all equal to $i$.
By iterating Zhao's method for each $D_i$,
we evaluate the 2-adic valuation for general $D$ and prove Theorem \ref{thm:2Lvaluation}.

We deal only with the family $y^2=x^3+Dx$ in this paper.
However, the essence of the proof of Theorem \ref{thm:2Lvaluation}
is that $D$ can be uniquely decomposed into the product of primes in $K$.
Therefore, our iterative Zhao's method may be applicable to
CM elliptic curves defined over fields with class number one.
\\

After writing this paper, we noticed that Kezuka has also given a lower bound
of the 3-adic valuation for the elliptic curve $y^2=x^3-2^43^3D^2$
defined over $\bbQ$ using an iterative Zhao's method similar to ours
in the proof of \cite[Theorem 2.4]{Kezuka:2021}.
\\

In section \ref{sec:ExpressFiniteSum}, we write the $L$-value at $s=1$
as a finite sum using a special value of the Weierstrass $\wp$-function.
In section \ref{sec:2AdicValuation}, we evaluate the 2-adic valuation
of the $L$-value by using Zhao's method.
In the proof of Theorem \ref{thm:2Lvaluation}, we use Zhao's method iteratively.
For this reason, the proof is complicated,
and please refer to the inserted figures as necessary.
\section{Expression of \texorpdfstring{$L$}{L}-value as a finite sum}\label{sec:ExpressFiniteSum}

In this section, we write the $L$-value at $s=1$ as a finite sum
using a special value of the Weierstrass $\wp$-function.
Theorem \ref{thm:finite_sum} has already been proved
by Birch and Swinnerton-Dyer\cite{BirchSwinnerton-Dyer:1965};
however, for the readers convenience, we calculate it again.\\

Since $E_{-D}$ is isomorphic to $E_{-d^4D}$ over $K$ for $d\in K^\times$,
we may assume that $D\in \calO_K$ and quartic-free.
In the rest of this paper, we consider only the elliptic curve
$E_{-D}: y^2=x^3+Dx$ defined over $K$ for $D\in \calO_K$
that is coprime to $2$ and quartic-free.
Let $\psi_{-D}$ be the Hecke character of $K$ associated to $E_{-D}$
and let
\begin{align}
    \Omega=\int_{1}^{\infty}\frac{dx}{\sqrt{x^3-x}}=2.6220575\dots
\end{align}
be the least positive real element of the period lattice of $E_1: y^2=x^3-x$.
For a non-zero element $g\in \calO_K$,
$L_{g}(\overline{\psi}, s)$ denotes the Hecke $L$-function of $\overline{\psi}$
omitting all Euler factors corresponding to the primes that divide $g\calO_K$; that is;
\begin{align}
    L_{g}(\overline{\psi}, s)=L(\overline{\psi}, s)\prod_{\frakp|g\calO_K}\skakko*{1-\dfrac{\overline{\psi}(\frakp)}{N\frakp^s}}.
\end{align}
For a non-zero ideal $\frakg$ of $\calO_K$,
we define $L_{\frakg}(\overline{\psi}, s)$ in the same way. 
Fix $\overline{\bbQ}$ and $\overline{\bbQ_2}$ as algebraic closures of $\bbQ$ and $\bbQ_2$,
and fix embeddings $\overline{\bbQ} \hookrightarrow \overline{\bbQ_2}$
and $\overline{\bbQ} \hookrightarrow \bbC$.
Let $v_2$ denote the $2$-adic valuation of $\bbQ_2$ normalized so that $v_2(2)=1$
and extend to $\overline{\bbQ_2}$, which is also written as $v_2$.

\begin{prop}\label{prop:Tate_alg}
    Suppose $D\in \calO_K$ is congruent to $1$ modulo $2+2i$.
    The elliptic curve $E_{-D}$ has bad reduction at all primes dividing $D\calO_K$.
    Moreover, $E_{-D}$ has good reduction at the prime $(1+i)\calO_K$ if and only if $(i/D)_4=i$, where $(\cdot/\cdot)_4$ is the quartic residue character.
    In particular, we have
    \begin{align}
        v_2\skakko*{\dfrac{L_{2D}(\overline{\psi_{-D}}, 1)}{\Omega}}
        =v_2\skakko*{\dfrac{L_{2}(\overline{\psi_{-D}}, 1)}{\Omega}}=
        \begin{cases}
            v_2\skakko*{\dfrac{L(\overline{\psi_{-D}}, 1)}{\Omega}}-\dfrac{1}{2} & ((i/D)_4=i), \vspace{5pt}\\
            v_2\skakko*{\dfrac{L(\overline{\psi_{-D}}, 1)}{\Omega}} & (\text{otherwise}).
        \end{cases}
    \end{align}
\end{prop}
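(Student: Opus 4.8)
The plan is to establish three separate assertions: the bad reduction at primes dividing $D$, the reduction behavior at $(1+i)$, and finally the valuation identity. For the first claim, I would compute the discriminant of $E_{-D}: y^2 = x^3 + Dx$, which is $\Delta = -64D^3$ (up to the usual normalization), and its $j$-invariant $j = 1728$. Since $E_{-D}$ has potential good reduction everywhere (CM by $\calO_K = \bbZ[i]$), the reduction type is determined by the minimal discriminant. For a prime $\frakp \mid D\calO_K$ with $\frakp \nmid 2\calO_K$, the model $y^2 = x^3 + Dx$ is already minimal at $\frakp$ (the coefficients cannot all be scaled away since $D$ is quartic-free), and $v_{\frakp}(\Delta) = 3v_{\frakp}(D) \in \{3, 6, 9\}$ is not divisible by $12$, so $\frakp$ is a prime of bad (additive) reduction. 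This is essentially Tate's algorithm, and the congruence $D \equiv 1 \bmod 2+2i$ just ensures a clean normalization.

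For the behavior at $\frakp = (1+i)$, I would analyze whether the model can be put in good-reduction form after allowing change of coordinates over $K_{\frakp}$. The equation $y^2 = x^3 + Dx$ becomes, after the substitution $x \mapsto u^2 x, y \mapsto u^3 y$, the equation $y^2 = x^3 + (D/u^4)x$; good reduction at $(1+i)$ requires $v_{(1+i)}(D/u^4) = 0$ for suitable $u$, i.e. $v_{(1+i)}(D) \equiv 0 \bmod 4$ — but $D$ is coprime to $2$, so $v_{(1+i)}(D) = 0$ always and we must instead twist. The curve $E_{-D}$ is the quartic twist of $E_1: y^2 = x^3 - x$ by $-D$, and the ramification at $(1+i)$ of the associated Hecke character (equivalently the conductor exponent at $(1+i)$) is governed by the quartic residue symbol $(i/D)_4$ or equivalently by whether $-D$ is a fourth power in $K_{(1+i)}^{\times}$ up to units. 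Concretely, I would recall that $E_1$ itself has conductor exponent $6$ at $(1+i)$ (additive, with $\psi_1$ ramified there), and that twisting by $-D$ with $D \equiv 1 \bmod 2+2i$ changes this: the twisted curve acquires good reduction at $(1+i)$ precisely when the quartic character $\chi_{-D}$ is unramified at $(1+i)$, which by the quartic reciprocity / conductor computation is the condition $(i/D)_4 = i$. This is the delicate computation — tracking the local conductor at the wildly-behaving prime $(1+i)$ — and I expect it to be the main obstacle; it requires either an explicit Tate's algorithm run over $\bbZ_2[i]$ or a careful analysis of the quartic twist cocycle at $(1+i)$, using the standard fact that $\calO_{K,(1+i)}^{\times}/(\calO_{K,(1+i)}^{\times})^4$ is controlled by residues mod $(1+i)^k$ for small $k$.

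Granting the reduction statements, the valuation identity is a matter of comparing Euler factors. By definition $L_{2D}(\overline{\psi_{-D}}, s) = L(\overline{\psi_{-D}}, s) \prod_{\frakp \mid 2D} (1 - \overline{\psi_{-D}}(\frakp) N\frakp^{-s})$. For every prime $\frakp \mid D\calO_K$, bad reduction means $\overline{\psi_{-D}}(\frakp) = 0$, so the corresponding Euler factor at $s=1$ equals $1$ and contributes nothing; hence $L_{2D}(\overline{\psi_{-D}},1) = L_{2}(\overline{\psi_{-D}},1)$, which gives the first equality. For the prime $(1+i)$: if $(i/D)_4 = i$ then $E_{-D}$ has good reduction there, $\overline{\psi_{-D}}((1+i))$ has absolute value $N((1+i))^{1/2} = \sqrt 2$, and the Euler factor $1 - \overline{\psi_{-D}}((1+i))/N((1+i)) = 1 - \overline{\psi_{-D}}((1+i))/2$ has $2$-adic valuation $v_2(2 - \overline{\psi_{-D}}((1+i))) - 1$; since $\overline{\psi_{-D}}((1+i))$ is an algebraic integer of absolute value $\sqrt 2$ dividing $2$, one checks $v_2(2 - \overline{\psi_{-D}}((1+i))) = 1/2$, so the Euler factor contributes $1/2 - 1 = -1/2$ to the valuation, accounting for the $-1/2$ in the first branch. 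If instead $(i/D)_4 \neq i$, then $E_{-D}$ has bad reduction at $(1+i)$, $\overline{\psi_{-D}}((1+i)) = 0$, the Euler factor at $s=1$ equals $1$, and no correction appears. Assembling these cases with the first equality yields the displayed formula. The only arithmetic subtlety here is the computation $v_2(2 - \alpha) = 1/2$ for $\alpha$ a root of $X^2 - aX + 2$ with $a$ odd, which follows from $2 - \alpha$ being a uniformizer of the ramified quadratic extension $K_{(1+i)}(\alpha)$, or more simply from $v_2(\alpha) = 1/2$ together with $v_2(2) = 1$.
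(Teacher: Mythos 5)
Your first and third assertions are handled correctly and essentially as in the paper: minimality of $y^2=x^3+Dx$ at $\frakp\mid D$ (from $D$ quartic-free) plus $v_\frakp(\Delta)=3v_\frakp(D)\neq 0$ gives bad reduction there, and the valuation identity is exactly a comparison of Euler factors, with $v_2\bigl(1-\overline{\psi_{-D}}((1+i))/2\bigr)=-1/2$ in the good-reduction case because $v_2\bigl(\overline{\psi_{-D}}((1+i))\bigr)=1/2$. (One small slip: $\overline{\psi_{-D}}((1+i))=u(1+i)$ for a unit $u$, so its trace is $\pm 2$, not odd; your fallback argument via $v_2(\alpha)=1/2$ and $v_2(2)=1$ is the correct one and is what the paper uses.)

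The genuine gap is the second assertion, which is the technical heart of the proposition: you do not actually establish that $E_{-D}$ has good reduction at $(1+i)$ if and only if $(i/D)_4=i$. You explicitly defer this (``this is the delicate computation \dots the main obstacle''), and the heuristic you offer in its place is incorrect as stated: you claim the twisted curve acquires good reduction at $(1+i)$ precisely when the quartic twisting character $\chi_{-D}$ is \emph{unramified} at $(1+i)$. Since $\psi_1$ is itself ramified at $(1+i)$ (the untwisted curve has additive reduction there), an unramified twist would leave $\psi_{-D}=\psi_1\chi_{-D}$ ramified and hence preserve bad reduction; the correct criterion is that the local component of $\chi_{-D}$ at $(1+i)$ cancels that of $\psi_1$ on the local units, and identifying exactly when this happens in terms of $(i/D)_4$ is precisely the computation that must be done. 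The paper does it by running Tate's algorithm explicitly at $\pi=1+i$: writing $D=1+(2+2i)(s+ti)$, it reduces the question to whether the cubic $P(T)=T^3-(s-t-1)$ has a triple root over $\calO_K/(\pi)$, which happens exactly when $s-t\equiv 3\bmod 4$, i.e.\ $(i/D)_4=i$, and then continues through Steps 8, 9, 11 to reach a model with unit discriminant. Without this (or an equally explicit local conductor computation), the case division in the displayed formula, and hence the proposition, is not proved.
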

\begin{proof}
    Since the discriminant of the equation $y^2=x^3+Dx$ is $(1+i)^{12}D^3$ and $D$ is quartic-free,
    the elliptic curve $E_{-D}$ is minimal at all primes dividing $D\calO_K$.
    Therefore, the first claim follows.
    We show that $E_{-D}$ has good reduction at $(1+i)\calO_K$ when $(i/D)_4=i$ using Tate's algorithm.
    In the other cases, we can show similarly that $E_{-D}$ has bad reduction at $(1+i)\calO_K$.
    From now on, we follow Silverman's notation and steps \cite[p.366]{Silverman:1994}.
    
    We start from step 1. Set $\pi=1+i$ and we have
    \begin{gather}
        \Delta=\pi^{12}D^3, \quad a_1=a_2=a_3=a_6=0, \quad a_4=D,\\
        b_2=b_6=0, \quad b_4=2D, \quad b_8=-D^2.
    \end{gather}
    Since $\pi\mid \Delta,$ we proceed to Step 2.
    The curve $\tilde{E}$ obtained by reduction of $E$ at $\pi$ has the singular point $(1, 0)$.
    Thus, we do the transformation $x\mapsto x+1$ and obtain the new equation
    \begin{align}
        y^2=x^3+3x^2+(D+3)x+(D+1)
    \end{align}
    whose reduction curve has the singular point $(0, 0)$. Then, we have
    \begin{gather}
        a_1=a_3=0, \quad a_2=3, \quad a_4=D+3, \quad a_6=D+1,\\
        b_2=12, \quad b_4=2D+6, \quad b_6=4D+4, \quad b_8=-D^2+6D+3.
    \end{gather}
    We can easily check $\pi\mid b_2, \pi^2\mid a_6, \pi^3\mid b_6, b_8$ and proceed to Step 6.
    Let $k$ be the residue field $\calO_K/(\pi)$ and fix an algebraic closure $\bar{k}$.
    The following equations over $k$
    \begin{gather}
        Y^2+a_1Y-a_2\equiv (Y-\alpha)^2\bmod \pi,\\
        Y^2+a_{3, 1}Y-a_{6, 2}\equiv (Y-\beta)^2\bmod \pi
    \end{gather}
    have the solution $\alpha=\beta=1$.
    Thus, we do the transformation $y\mapsto y+x+\pi$ and obtain the new equation
    \begin{align}
        y^2+2xy+2\pi y=x^3+2x^2+(D+3-2\pi)x+(D+1-\pi^2).
    \end{align}
    Then, we have
    \begin{gather}
        a_1=a_2=2, \quad a_3=2\pi, \quad a_4=D+3-2\pi, \quad a_6=D+1-\pi^2,\\
        b_2=12, \quad b_4=2D+6, \quad b_6=4D+4, \quad b_8=-D^2+6D+3.
    \end{gather}
    We consider the polynomial over $k$
    \begin{align}
        P(T)=T^3+a_{2, 1}T^2+a_{4, 2}T+a_{6, 3}.
    \end{align}
    If we write $D=1+(2+2i)(s+ti)$ for $s, t\in \bbZ$, then we see that $P(T)=T^3-(s-t-1)$.
    By properties of the quartic residue symbol, $(i/D)_4=i$ is equivalent to $s-t\equiv 3\bmod 4$.
    Thus, $P(T)$ has the triple root $T=0$ and we proceed to Step 8.
    Since the polynomial over $k$
    \begin{align}
        Y^2+a_{3, 2}Y-a_{6, 4}=Y^2-s
    \end{align}
    has the double root $Y=0$ if $s\equiv 0\bmod 2$ and $Y=1$ if $s\equiv 1\bmod 2$.
    We suppose $s\equiv 0 \bmod 2$ and proceed to Step 9.
    (For the case $s\equiv 1\bmod 2$, we proceed to Step 9 after transformation $y\mapsto y+\pi^2$.)
    Since $\pi^4\mid a_4$ and $\pi^6\mid a_6$, we proceed to Step 11.
    Then, the transformation $x\mapsto \pi^2x, y\mapsto \pi^3y$ leads to the new equation
    \begin{align}
        y^2+\frac{2}{\pi}xy+\frac{2}{\pi^2}y=x^3+\frac{2}{\pi^2}x^2+\frac{D+3-2\pi}{\pi^4}x+\frac{D+1-\pi^2}{\pi^6}
    \end{align}
    whose discriminant is $D^3$. Therefore, the elliptic curve $E$ has good reduction at $(1+i)\calO_K$ and we finish Tate's algorithm.

    When $(i/D)_4=i$, $\psi_{-D}((1+i))$ is non-zero and equal to $u(1+i)$ for some $u\in \calO_K^\times$.
    Therefore, we have
    \begin{align}
        v_2\skakko*{\frac{\psi_{-D}((1+i))}{N(1+i)}}=v_2\skakko*{\frac{u(1+i)}{2}}=-\frac{1}{2}\neq 0.
    \end{align}
    Thus, the 2-adic valuation of the Euler factor at $(1+i)\calO_K$ is equal to $-1/2$.
\end{proof}

\begin{rem}
    If $(i/D)_4=i$, then the minimal model of $E_{-D}$ at $(1+i)\calO_K$ is
    \begin{align}
        \begin{cases}
            y^2+(1-i)xy-iy=x^3-ix^2-\frac{D+1-2i}{4}x+\frac{iD+2+i}{8} & (s\equiv 0\bmod 2),\\
            y^2+(1-i)xy+(1-2i)y=x^3-ix^2-\frac{D+1-6i}{4}x+\frac{iD+6+9i}{8} & (s\equiv 1\bmod 2),
        \end{cases}
    \end{align}
    where $D=1+(2+2i)(s+ti) \ (s, t\in \bbZ)$.
    Local informations at $(1+i)\calO_K$ including Kodaira symbols
    is summarized in TABLE \ref{tb:LocalInformation}.
    For other primes that divide $D$,
    we obtain TABLE \ref{tb:LocalInformation2} by Tate's algorithm.
\end{rem}

\begin{table}[htbp]
    \centering
    \begin{tabular}{c|ccccc}
        $(i/D)_4$ & \textrm{Kodaira Symbol} & $m$ & $v$ & $f$ & $c$ \\ \hline
        $\pm 1$ & $\textrm{I}_0^\ast$ & 5 & 12 & 8 & 2 \\
        $i$ & $\textrm{I}_0$ & 1 & 0 & 0 & 1 \\
        $-i$ & $\textrm{I}\hspace{-.1em}\textrm{I}^\ast$ & 9 & 12 & 4 & 1 \\
    \end{tabular}
    \caption{Local informations at $(1+i)\calO_K$}
    \label{tb:LocalInformation}
\end{table}

\begin{table}[htbp]
    \centering
    \begin{tabular}{c|ccccc}
        $\pi\mid D$ & \textrm{Kodaira Symbol} & $m$ & $v$ & $f$ & $c$ \\ \hline
        $\pi\mid D_1$ & $\textrm{I}\hspace{-.1em}\textrm{I}\hspace{-.1em}\textrm{I}$ & 2 & 3 & 2 & 2 \\
        $\pi\mid D_2, \ (D/\pi)_2=1$ & $\textrm{I}_0^\ast$ & 5 & 6 & 2 & 4 \\
        $\pi\mid D_2, \ (D/\pi)_2=-1$ & $\textrm{I}_0^\ast$ & 5 & 6 & 2 & 2 \\
        $\pi\mid D_3$ & $\textrm{I}\hspace{-.1em}\textrm{I}\hspace{-.1em}\textrm{I}^\ast$ & 8 & 9 & 2 & 2 \\
    \end{tabular}
    \caption{Local informations at $\pi \calO_K \ (\pi\mid D)$}
    \label{tb:LocalInformation2}
\end{table}

As mentioned in the last paragraph of Section \ref{sec:Introduction},
we iterate Zhao's method. For this purpose,
we first decompose $D$ uniquely up to units in $\calO_K$
according to the index of a prime dividing $D$,
such as $D_1^{(n)}D_2^{(m)}D_3^{(\ell)}$, where
\begin{align}
    D_1^{(n)}=\prod_{\pi_{1, i}\in S_1}\pi_{1, i}, \quad D_2^{(m)}=\prod_{\pi_{2, j}\in S_2}\pi_{2, j}^2, \quad D_3^{(\ell)}=\prod_{\pi_{3, k}\in S_3}\pi_{3, k}^3,
\end{align}
and $S_1=\{\pi_{1, 1}, \dots, \pi_{1, n}\}, S_2=\{\pi_{2, 1}, \dots, \pi_{2, m}\}, S_3=\{\pi_{3, 1}, \dots, \pi_{3, \ell}\}$
are disjoint sets of distinct primes of $\calO_K$ which are coprime to $2$.
Here, a prime of $\calO_K$ is said to be primary if it is congruent to $1$ modulo $2+2i$.
For a prime $\pi$ which is coprime to $2$,
it is known that only one of $\{\pm \pi, \pm i\pi\}$ is primary.
Therefore, all primes in $S_i \ (i=1, 2, 3)$ are assumed to be primary,
and $D$ is congruent to $1$ modulo $2+2i$.
We abbreviate $D_i^{(\ast)}$ as $D_i$
if we do not care about the number of the primes in $S_i$.

Next, we represent all divisors $D_T$ of $D$ as follows.
Let $T_1\subset \{1, \dots, n\}, T_2\subset \{1, \dots, m\}, T_3\subset \{1, \dots, \ell\}$ be arbitrary subsets
(including the case where $T_1, T_2$, and $T_3$ are empty sets).
Then, we define
\begin{align}
    D_{T_1}=\prod_{i\in T_1}\pi_{1, i}, \quad
    D_{T_2}=\prod_{j\in T_2}\pi_{2, j}^2, \quad
    D_{T_3}=\prod_{k\in T_3}\pi_{3, k}^3
\end{align}
and $D_T=D_{T_1}D_{T_2}D_{T_3}$.
When $T_i=\varnothing \ (i=1, 2, 3)$, we define $D_{T_i}=1$.

For a lattice $\calL$ of $\bbC$ and integer $k\geq 0$,
we define the holomorphic function on the domain $\Re(s)>1+k/2$ by
\begin{align}
    H_k(z, s, \calL)=\sum_{w\in \calL}{}^{'}\frac{\overline{(z+w)}^k}{|z+w|^{2s}}.
\end{align}
Here, $\sum'$ implies that $w=-z$ is excluded if $z\in \calL$.
The function $s\mapsto H_k(z, s, \calL)$ has the analytic continuation
to the entire complex $s$-plane if $k\geq 1$. We set
\begin{align}
    \calE_1^\ast(z, \calL)=H_1(z, 1, \calL).
\end{align}

\begin{prop}[{\cite[p.201, Proposition 5.5]{GoldsteinSchappacher:1981}}]\label{prop:GS}
    Let $E$ be an elliptic curve over an imaginary quadratic field $K$
    with complex multiplication by $\calO_K$.
    Fix a Weierstrass model of $E$ and take $\Omega_E\in \bbC^\times$
    such that the period lattice of $E$ is $\Omega_E\calO_K$.
    We write $\phi$ as the Hecke character of $K$ associated to $E$
    and suppose the conductor of $\phi$ divides a non-zero integral ideal $\frakg$ of $K$.
    Let $B$ be a minimal set consisting of ideals prime to $\frakg$ such that
    \begin{align}
        \Gal(K(E[\frakg])/K)=\mkakko*{\sigma_{\frakb}\mid \frakb\in B},
    \end{align}
    where $\sigma_{\frakb}$ is the Artin symbol corresponding to $\frakb$.
    We take $\rho\in \Omega_E K^\times$ such that $\rho\Omega_E^{-1}\calO_K=\frakg^{-1}$.
    Then, for $k\geq 1$, the following holds:
    \begin{align}
        \frac{\overline{\rho}^k}{|\rho|^{2s}}L_\frakg(\overline{\phi}^k, s)
        =\sum_{\frakb\in B}H_k(\phi(\frakb)\rho, s, \calL).
    \end{align}
\end{prop}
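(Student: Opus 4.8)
The plan is to reconstruct the classical ``Damerell-type'' computation of \cite{GoldsteinSchappacher:1981}. I would work first in the half-plane $\Re(s)>1+k/2$, where all the series below converge absolutely, and extend the resulting identity to the whole $s$-plane by analytic continuation (legitimate since $k\geq 1$). Unfold the $L$-series as
\begin{align}
    L_\frakg(\overline{\phi}^k, s)=\sum_{\substack{0\neq \fraka\subset \calO_K\\ (\fraka, \frakg)=1}}\frac{\overline{\phi(\fraka)}^{\,k}}{N\fraka^{s}},
\end{align}
which is valid because the conductor of $\phi^k$ divides that of $\phi$, hence divides $\frakg$. Since $\phi(\fraka)$ and the Artin symbol $\sigma_\fraka$ depend only on the ray class of $\fraka$ modulo $\frakg$, group the ideals according to $\sigma_\fraka\in \Gal(K(E[\frakg])/K)$; by minimality of $B$ each Galois element equals $\sigma_\frakb$ for exactly one $\frakb\in B$, so
\begin{align}
    L_\frakg(\overline{\phi}^k, s)=\sum_{\frakb\in B}\ \sum_{\fraka\colon \sigma_\fraka=\sigma_\frakb}\frac{\overline{\phi(\fraka)}^{\,k}}{N\fraka^{s}}.
\end{align}

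Next I would evaluate the block attached to a fixed $\frakb$ and rescale it to the period lattice. Since the case relevant here ($K=\bbQ(i)$) has class number one I may take $\frakb=(\beta)$ principal; then the ideals $\fraka$ with $\sigma_\fraka=\sigma_\frakb$ are exactly the $(\alpha)$ with $\alpha\in\calO_K$ coprime to $\frakg$ and $\alpha\equiv u\beta\bmod\frakg$ for some unit $u$. Using the defining property of the CM Hecke character --- $\phi((\alpha))=\epsilon(\alpha)\alpha$ for a finite-order character $\epsilon$ of $(\calO_K/\frakf)^\times$ valued in $\calO_K^\times$, satisfying $\epsilon(u)=\overline{u}$ on units --- together with the homogeneity $H_k(uz, s, \calL')=\overline{u}^{\,k}H_k(z, s, \calL')$ valid for $u\in\calO_K^\times$ and $\calL'$ a fractional $\calO_K$-ideal (which $\frakg$ and $\calL$ both are), the sum over the $w_K:=\#\calO_K^\times$ cosets collapses --- the $w_K$ copies cancel the $1/w_K$ from overcounting generators --- and the block equals $\overline{\epsilon(\beta)}^{\,k}\,H_k(\beta, s, \frakg)=H_k(\epsilon(\beta)\beta, s, \frakg)=H_k(\phi(\frakb), s, \frakg)$. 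Finally, the hypothesis $\rho\Omega_E^{-1}\calO_K=\frakg^{-1}$ gives $\rho\frakg=\Omega_E\calO_K=\calL$, so the homogeneity $H_k(cz, s, c\calL')=\overline{c}^{\,k}|c|^{-2s}H_k(z, s, \calL')$ with $c=\rho$ yields $H_k(\phi(\frakb), s, \frakg)=\overline{\rho}^{-k}|\rho|^{2s}\,H_k(\phi(\frakb)\rho, s, \calL)$. Summing over $\frakb\in B$ and multiplying by $\overline{\rho}^{\,k}/|\rho|^{2s}$ produces the asserted identity; the general class-number case is entirely analogous, working with fractional ideals in place of generators.

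The main obstacle is the middle step: correctly matching the set of integral ideals in a fixed ray class with a union of cosets of $\frakg$, tracking the unit ambiguity in the generators, and verifying that the $\epsilon$-contribution is precisely what makes the finite and archimedean parts of $\phi$ recombine into $\phi(\frakb)\rho$ as the argument of $H_k$. Once that compatibility between the Artin symbol, the Hecke character, and the chosen period $\rho$ is pinned down, the remaining ingredients --- the unfolding, the homogeneity of $H_k$, and the analytic continuation --- are routine.
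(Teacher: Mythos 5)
This proposition is quoted verbatim from Goldstein--Schappacher and the paper gives no proof of it, so there is nothing to compare against; your reconstruction is essentially Goldstein--Schappacher's own argument (unfold $L_\frakg(\overline{\phi}^k,s)$, group by Artin symbol, identify each ray class with a coset $\phi(\frakb)+\frakg$ up to units, rescale by $\rho$), and it is correct in all essentials. Two small points: the clause ``$\phi(\fraka)$ depends only on the ray class of $\fraka$ modulo $\frakg$'' is false as written (only $\phi(\fraka)\bmod\frakg$, equivalently $\epsilon(\alpha)$, is a ray-class invariant) but your computation never uses it; and the $w_K$-fold cancellation implicitly assumes the units inject into $(\calO_K/\frakg)^\times$ so that the cosets $u\beta+\frakg$ are disjoint, which holds in the application ($K=\bbQ(i)$, $\frakg=4\Delta\calO_K$) and should be flagged for very small $\frakg$.
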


For the moment, we take $\Delta\in \calO_K\setminus\calO_K^\times$,
which is congruent to 1 modulo $2+2i$,
so that the conductor of $\psi_{-D_T}$ divides $4\Delta\calO_K$.
Later, we explicitly define $\Delta$ (see the paragraph after Lemma \ref{lem:Svaluation}).

\begin{lem}\label{lem:setB}
    We apply Proposition \ref{prop:GS} to
    $E=E_{-D_T}, \phi=\psi_{-D_T}, \frakg=4\Delta\calO_K$.
    Then a set $B$ can be taken as
    \begin{align}
        B=\mkakko*{(4c+\Delta), (4c+(1+2i)\Delta)\mid c\in \calC},
    \end{align}
    where $\calC$ is a complete system of representatives of $(\calO_K/\Delta \calO_K)^\times$.
\end{lem}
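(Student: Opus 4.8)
The plan is to reduce the statement to a finite computation in $(\calO_K/\frakg)^\times$, where $\frakg=4\Delta\calO_K$. By the main theorem of complex multiplication applied to $E_{-D_T}$ (see \cite{Silverman:1994}), for every ideal $\frakb$ of $\calO_K$ coprime to $\frakg$ the Artin symbol $\sigma_\frakb$ acts on the torsion module $E_{-D_T}[\frakg]$ as multiplication by $\psi_{-D_T}(\frakb)$. Since $E_{-D_T}[\frakg]$ is a free $\calO_K/\frakg$-module of rank one which generates the abelian extension $K(E_{-D_T}[\frakg])/K$, the induced representation $\Gal(K(E_{-D_T}[\frakg])/K)\hookrightarrow\mathrm{Aut}_{\calO_K}(E_{-D_T}[\frakg])=(\calO_K/\frakg)^\times$ is faithful. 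Hence $\sigma_\frakb=\sigma_{\frakb'}$ on $K(E_{-D_T}[\frakg])$ exactly when $\psi_{-D_T}(\frakb)\equiv\psi_{-D_T}(\frakb')\pmod{\frakg}$, and $\Gal(K(E_{-D_T}[\frakg])/K)$ is identified with the image $G$ of the homomorphism $\overline{\psi}\colon(\calO_K/\frakg)^\times\to(\calO_K/\frakg)^\times$ sending $\beta$ to $\psi_{-D_T}((\beta))\bmod\frakg$; this is well defined because $\psi_{-D_T}$ is multiplicative and, by the choice of $\Delta$, its conductor divides $\frakg$. Thus it suffices to prove that the ideals in $B$ are coprime to $\frakg$ and that, for the generators $\beta$ displayed in the statement, the classes $\psi_{-D_T}((\beta))\bmod\frakg$ run bijectively over $G$.

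To compute $G$ and its size, write $\psi_{-D_T}((\beta))=\lambda(\beta)\,\beta$ with $\lambda(\beta)\in\calO_K^\times$ depending only on $\beta$ modulo the conductor (possible since $\psi_{-D_T}((\beta))$ generates $(\beta)$). For $u\in\calO_K^\times$ this gives $\overline{\psi}(u)=\psi_{-D_T}(\calO_K)=1$, whereas if $\beta\in\ker\overline{\psi}$ then $\beta\equiv\lambda(\beta)^{-1}\pmod{\frakg}$ is the class of a unit; hence $\ker\overline{\psi}$ equals the image of $\calO_K^\times$. The four units of $\calO_K$ are pairwise incongruent modulo $\frakg$ (their differences have norm at most $4$, while $N\frakg\geq 32$), so $\lvert\ker\overline{\psi}\rvert=4$. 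Since $(4)=(1+i)^4$ one gets $\lvert(\calO_K/4\calO_K)^\times\rvert=2^4-2^3=8$, and $4\calO_K$ is coprime to $\Delta\calO_K$ because $\Delta\equiv 1\bmod(1+i)^3$; therefore $\lvert(\calO_K/\frakg)^\times\rvert=8\lvert\calC\rvert$ and $\lvert G\rvert=8\lvert\calC\rvert/4=2\lvert\calC\rvert$, which is the number of ideals listed in $B$.

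The core of the proof is then a transversal computation. By the Chinese Remainder Theorem $(\calO_K/\frakg)^\times\cong(\calO_K/4\calO_K)^\times\times(\calO_K/\Delta\calO_K)^\times$, and by the previous paragraph $\overline{\psi}$ descends to an isomorphism $(\calO_K/\frakg)^\times/\calO_K^\times\xrightarrow{\sim}G$, where (the image of) $\calO_K^\times$ sits diagonally; so it remains to check that the ideals in $B$ are pairwise non-associate and represent every class of $(\calO_K/\frakg)^\times/\calO_K^\times$. Under the CRT isomorphism, $4c+\Delta\mapsto(\Delta\bmod 4,\ 4c\bmod\Delta)$ and $4c+(1+2i)\Delta\mapsto((1+2i)\Delta\bmod 4,\ 4c\bmod\Delta)$; these are units modulo $\frakg$ since, modulo $4$, they reduce to $\Delta$ and $(1+2i)\Delta$, which are coprime to $2$, and, modulo $\Delta$, they reduce to the unit $4c$. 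As $c$ runs over $\calC$ the second coordinate $4c$ runs once over $(\calO_K/\Delta\calO_K)^\times$. If two of the listed generators are associate, say $\beta_1=u\beta_2$ for a unit $u$, then reducing modulo $4$ shows that $u$ equals the ratio of the first coordinates of $\beta_1$ and $\beta_2$, which is $1$ when $\beta_1,\beta_2$ are of the same of the two types and $1+2i$ or $(1+2i)^{-1}$ when they are of different types; the latter cannot happen, for otherwise $1+2i$ would be congruent modulo $4$ to a unit of $\calO_K$, contradicting that none of $2i,\ 1+i,\ 2+2i,\ 1+3i$ is divisible by $4$; in the former case $u=1$, after which reduction modulo $\Delta$ forces $\beta_1=\beta_2$. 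Thus the map from the listed generators to $(\calO_K/\frakg)^\times/\calO_K^\times$ is injective between sets of the same cardinality $2\lvert\calC\rvert$, hence bijective, so $\{\sigma_\frakb\mid\frakb\in B\}=\Gal(K(E_{-D_T}[\frakg])/K)$ with distinct $\frakb$ giving distinct automorphisms, and $B$ is a minimal such set, as required in Proposition \ref{prop:GS}.

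I expect the one genuinely delicate point to be the first paragraph: invoking the main theorem of complex multiplication correctly so that $\Gal(K(E_{-D_T}[\frakg])/K)$ is faithfully realized inside $(\calO_K/\frakg)^\times$ as the image of $\overline{\psi}$, with the Artin symbols acting through $\psi_{-D_T}$. Everything else — the counts $\lvert(\calO_K/4\calO_K)^\times\rvert=8$ and $\lvert\ker\overline{\psi}\rvert=4$, and the CRT bookkeeping ruling out coincidences among the listed ideals — is elementary and, once the setup is fixed, essentially forced.
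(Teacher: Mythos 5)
Your proof is correct and follows essentially the same route as the paper: both identify $\Gal(K(E_{-D_T}[4\Delta])/K)$ with $(\calO_K/4\Delta\calO_K)^\times/\calO_K^\times$ (the paper by citing Goldstein--Schappacher's Lemma 4.7 and the ray class field, you via the CM action on $E_{-D_T}[4\Delta]$), count that this group has $2\,\#\calC$ elements, and verify that the listed generators land in pairwise distinct classes. If anything, your CRT bookkeeping is more careful than the paper's, since it explicitly rules out the unit ambiguity $4c+\Delta\equiv u\,(4c'+(1+2i)\Delta)\bmod 4\Delta$ that the paper's congruence check glosses over.
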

\begin{proof}
    Since the conductor of $\overline{\psi_{-D_T}}$ divides $4\Delta\calO_K$,
    from \cite[p.196, Lemma 4.7]{GoldsteinSchappacher:1981},
    we have $K(E_{-D_T}[4\Delta])=K(4\Delta)$.
    Thus the following isomorphism via the Artin map holds:
    \begin{align}
        \mathrm{Gal}(K(E_{-D_T}[4\Delta])/K)
        \simeq (\calO_K/4\Delta\calO_K)^\times/\calO_K^\times.\label{eq:setB}
    \end{align}
    Hence the cardinality of $B$ must be equal to
    $2 \cdot \# (\calO_K/\Delta\calO_K)^\times$.
    Therefore, it is sufficient to show that the Artin symbols
    corresponding to any two different elements in $B$ are different from each other.
    We show that $\sigma_{(4c+\Delta)}\neq \sigma_{(4c'+\Delta)}$ for $c\neq c'\in \calC$.
    Assume that $\sigma_{(4c+\Delta)}=\sigma_{(4c'+\Delta)}$.
    Then $4c+\Delta$ must be congruent to $4c'+\Delta$ modulo $4\Delta$.
    However, this implies that $c$ and $c'$ belong same equivalence class
    in $(\calO_K/\Delta\calO_K)^\times$, which is a contradiction.
    Other cases can be shown in the same way.
\end{proof}

We define the sign of $\Delta$ by $\sgn(\Delta)=1$ if $\Delta\equiv 1\bmod 4$
and $\sgn(\Delta)=-1$ if $\Delta \equiv 3+2i\bmod 4$.
For simplicity, we set
\begin{align}
    \varepsilon_T
    =\sgn(\Delta)\skakko*{\dfrac{-1}{D_T}}_4^{\frac{1+\sgn(\Delta)}{2}}\in \{\pm 1\}.
\end{align}

\begin{lem}\label{lem:Heckechar}
    For $c\in \calC$, we have
    \begin{align}
        &\psi_{-D_T}((4c+\Delta))=\varepsilon_T\overline{\skakko*{\dfrac{c}{D_T}}_4}(4c+\Delta),\\
        &\psi_{-D_T}((4c+(1+2i)\Delta))=\varepsilon_T\overline{\skakko*{\dfrac{c}{D_T}}_4}(4c+(1+2i)\Delta).
    \end{align}
\end{lem}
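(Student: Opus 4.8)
The strategy is to compute the Hecke character $\psi_{-D_T}$ explicitly on the principal ideals $(4c+\Delta)$ and $(4c+(1+2i)\Delta)$ by relating it to the Hecke character of the ``base'' curve $E_{-1}$ via quartic twisting, and then to identify the twisting cocycle with a quartic residue symbol. Recall that $E_{-D_T}$ is the quartic twist of $E_{-1}: y^2=x^3+x$ by $D_T$, so the associated Hecke characters satisfy a relation of the form $\psi_{-D_T}(\mathfrak a)=\left(\tfrac{\alpha}{D_T}\right)_4^{\pm 1}\psi_{-1}(\mathfrak a)$ for a generator $\alpha$ of $\mathfrak a$ prime to $2D_T$, where the exact exponent and normalization are pinned down by the requirement that $\psi_{-D_T}$ reduce correctly modulo the conductor. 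The first step is therefore to recall (or reprove in a sentence) the standard formula for $\psi_{-1}$ itself: for a primary prime $\pi$ of $\calO_K$ coprime to $2$, $\psi_{-1}((\pi))$ is the unique generator of $(\pi)$ lying in a fixed congruence class modulo $2+2i$ (this is the Hecke character of $y^2=x^3+x$, with conductor $(2+2i)$ up to the usual normalization), and then extend multiplicatively. Because $4c+\Delta$ and $4c+(1+2i)\Delta$ are already written as explicit elements of $\calO_K$, once we know $\psi_{-1}$ of the corresponding ideal we only need to track the unit.

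The second step is to compute the twisting factor. Writing $\psi_{-D_T}=\chi_{D_T}\cdot\psi_{-1}$ where $\chi_{D_T}$ is the quartic character associated to the extension $K(\sqrt[4]{D_T})/K$, we evaluate $\chi_{D_T}$ on $(4c+\Delta)$. Since $4c+\Delta\equiv\Delta\pmod 4$ and, modulo $D_T$, $4c+\Delta\equiv 4c+\Delta$, quartic reciprocity in $\calO_K=\bbZ[i]$ lets us flip $\left(\tfrac{4c+\Delta}{D_T}\right)_4$ into a product of $\left(\tfrac{D_T}{\,\cdot\,}\right)_4$ evaluated at the primes of $4c+\Delta$, or more directly: the supplementary laws for the primes above $2$ together with quartic reciprocity give $\left(\tfrac{4c+\Delta}{D_T}\right)_4 = \overline{\left(\tfrac{c}{D_T}\right)_4}$ times a correction that depends only on $\Delta$ and on $D_T\bmod 4$ — and this correction is exactly what is packaged into the constant $\varepsilon_T=\sgn(\Delta)\left(\tfrac{-1}{D_T}\right)_4^{(1+\sgn(\Delta))/2}$. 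Here one uses that $\left(\tfrac{4}{D_T}\right)_4=\left(\tfrac{2}{D_T}\right)_4^2=\left(\tfrac{-1}{D_T}\right)_4$ (by the supplementary law $\left(\tfrac{1+i}{D_T}\right)_4$-type identities reducing $\left(\tfrac{2}{D_T}\right)_4$ to a power of $i$ whose square is $\left(\tfrac{-1}{D_T}\right)_4$), and that the case split recorded by $\sgn(\Delta)$ (namely $\Delta\equiv 1$ versus $\Delta\equiv 3+2i\bmod 4$) controls whether the reciprocity flip introduces a sign. The same computation applies verbatim to $4c+(1+2i)\Delta$ because $(1+2i)$ is a unit modulo $D_T$ in the relevant sense and $\equiv 1\bmod 4$ only up to the factor already absorbed into $\varepsilon_T$; one checks $(1+2i)\equiv -i\cdot(\text{something}\equiv 1\bmod 4)$ or argues directly that $\left(\tfrac{1+2i}{D_T}\right)_4$ contributes trivially after combining with the unit $\psi_{-1}$ picks up.

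The third and final step is bookkeeping: assemble $\psi_{-D_T}((4c+\Delta))=\chi_{D_T}((4c+\Delta))\,\psi_{-1}((4c+\Delta))$, substitute the value $\psi_{-1}((4c+\Delta))=(4c+\Delta)$ (using that $4c+\Delta$ is itself in the correct congruence class mod $2+2i$, since $c$ ranges over representatives of $(\calO_K/\Delta\calO_K)^\times$ and $\Delta\equiv 1\bmod 2+2i$, so $4c+\Delta\equiv 1\bmod 2+2i$), and collect the $\Delta$-dependent unit and reciprocity signs into $\varepsilon_T$, yielding $\psi_{-D_T}((4c+\Delta))=\varepsilon_T\,\overline{\left(\tfrac{c}{D_T}\right)_4}(4c+\Delta)$, and identically for the second generator. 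I expect the \textbf{main obstacle} to be the careful verification of the quartic reciprocity computation — in particular getting the supplementary laws for $2$ and $1+i$ right and confirming that every $\Delta$-dependent factor (the sign from reciprocity when $D_T\not\equiv 1\bmod 4$, the value of $\left(\tfrac{4}{D_T}\right)_4$, and the unit discrepancy between $4c+\Delta$ and its primary associate) combines into precisely $\varepsilon_T$ and not some other fourth root of unity. This is where a sign error is most likely, so I would carry it out by first treating $\Delta\equiv 1\bmod 4$ (where $\varepsilon_T=1$ and reciprocity is ``clean''), then handling $\Delta\equiv 3+2i\bmod 4$ as a perturbation of that case.
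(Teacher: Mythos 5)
Your plan is essentially the paper's proof: both start from the explicit quartic-residue formula for the Hecke character of the quartic twist (the paper quotes $\psi_{-D_T}((\alpha))=\overline{(-D_T/\alpha)_4}\,\alpha$ for primary $\alpha$, which is your $\chi_{D_T}\cdot\psi_{-1}$ decomposition), reduce $4c+\Delta$ modulo the primes of $D_T$ using $\pi\mid\Delta$, and apply quartic reciprocity with a case split on $\Delta\bmod 4$ to collect everything into $\varepsilon_T$. One bookkeeping point to fix when you carry it out: $\psi_{-1}((4c+\Delta))$ is not $4c+\Delta$ but $\sgn(\Delta)(4c+\Delta)$, since the primary generator still carries the factor $\overline{(-1/(4c+\Delta))_4}=\sgn(\Delta)$; this is precisely the $\sgn(\Delta)$ appearing in $\varepsilon_T$, so it is absorbed exactly as you anticipate.
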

\begin{proof}
    As is well-known, for an ideal $\fraka$ of $\calO_K$ prime to $4D_T$, it holds that
    \begin{align}
        \psi_{-D_T}(\fraka)=\overline{\skakko*{\dfrac{-D_T}{\alpha}}_4}\alpha \quad (\fraka=(\alpha), \ \alpha\equiv 1\bmod 2+2i).
    \end{align}
    For example, see \cite[p.185, Exercise 2.34]{Silverman:1994}.
    Since $4c+\Delta\equiv 1\bmod 2+2i$, we have
    \begin{align}
        \psi_{-D_T}((4c+\Delta))
        &=\overline{\skakko*{\dfrac{-D_T}{4c+\Delta}}_4}(4c+\Delta)\\
        &=\skakko*{\dfrac{-1}{4c+\Delta}}_4\overline{\skakko*{\dfrac{D_T}{4c+\Delta}}_4}(4c+\Delta)\\
        &=\sgn(\Delta)\overline{\skakko*{\dfrac{D_T}{4c+\Delta}}_4}(4c+\Delta).
    \end{align}
    Let $p_{T_i}$ be the number of distinct primes that divide $D_{T_i}$
    and that are congruent to $3+2i$ modulo $4$.
    First, we consider the case of $\sgn(\Delta)=+1$.
    By the quartic reciprocity law, we can calculate as follows:
    \begin{align}
        \skakko*{\dfrac{D_T}{4c+\Delta}}_4
        &=\prod_{i\in T_1}\skakko*{\dfrac{\pi_{1, i}}{4c+\Delta}}_4\prod_{j\in T_2}\skakko*{\dfrac{\pi_{2, j}}{4c+\Delta}}_4^2\prod_{k\in T_3}\skakko*{\dfrac{\pi_{3, k}}{4c+\Delta}}_4^3\\
        &=\prod_{i\in T_1}\skakko*{\dfrac{4c+\Delta}{\pi_{1, i}}}_4\prod_{j\in T_2}\skakko*{\dfrac{4c+\Delta}{\pi_{2, j}}}_4^2\prod_{k\in T_3}\skakko*{\dfrac{4c+\Delta}{\pi_{3, k}}}_4^3\\
        &=\prod_{i\in T_1}\skakko*{\dfrac{-c}{\pi_{1, i}}}_4\prod_{j\in T_2}\skakko*{\dfrac{-c}{\pi_{2, j}}}_4^2\prod_{k\in T_3}\skakko*{\dfrac{-c}{\pi_{3, k}}}_4^3\\
        &=(-1)^{p_{T_1}+p_{T_3}}\prod_{i\in T_1}\skakko*{\dfrac{c}{\pi_{1, i}}}_4\prod_{j\in T_2}\skakko*{\dfrac{c}{\pi_{2, j}}}_4^2\prod_{k\in T_3}\skakko*{\dfrac{c}{\pi_{3, k}}}_4^3\\
        &=\skakko*{\dfrac{-1}{D_T}}_4\skakko*{\dfrac{c}{D_T}}_4.
    \end{align}
    In the same way, if $\sgn(\Delta)=-1$, then
    \begin{align}
        \skakko*{\dfrac{D_T}{4c+\Delta}}_4
        &=\prod_{i\in T_1}\skakko*{\dfrac{\pi_{1, i}}{4c+\Delta}}_4\prod_{j\in T_2}\skakko*{\dfrac{\pi_{2, j}}{4c+\Delta}}_4^2\prod_{k\in T_3}\skakko*{\dfrac{\pi_{3, k}}{4c+\Delta}}_4^3\\
        &=(-1)^{p_{T_1}+p_{T_3}}\prod_{i\in T_1}\skakko*{\dfrac{4c+\Delta}{\pi_{1, i}}}_4\prod_{j\in T_2}\skakko*{\dfrac{4c+\Delta}{\pi_{2, j}}}_4^2\prod_{k\in T_3}\skakko*{\dfrac{4c+\Delta}{\pi_{3, k}}}_4^3\\
        &=(-1)^{p_{T_1}+p_{T_3}}\prod_{i\in T_1}\skakko*{\dfrac{-c}{\pi_{1, i}}}_4\prod_{j\in T_2}\skakko*{\dfrac{-c}{\pi_{2, j}}}_4^2\prod_{k\in T_3}\skakko*{\dfrac{-c}{\pi_{3, k}}}_4^3\\
        &=\prod_{i\in T_1}\skakko*{\dfrac{c}{\pi_{1, i}}}_4\prod_{j\in T_2}\skakko*{\dfrac{c}{\pi_{2, j}}}_4^2\prod_{k\in T_3}\skakko*{\dfrac{c}{\pi_{3, k}}}_4^3\\
        &=\skakko*{\dfrac{c}{D_T}}_4.
    \end{align}
    The rest can be proved similarly.
\end{proof}

\begin{lem}\label{lem:Eseries}
    Denote the period lattice $\Omega\calO_K$ of $E_1: y^2=x^3-x$ as $\calL_{\Omega}$.
    Let $\wp(z)=\wp(z, \calL_{\Omega})$ be the Weierstrass $\wp$-function
    and let $\zeta(z)=\zeta(z, \calL_{\Omega})$ be the Weierstrass $\zeta$-function.
    Then for $c\in \calC$, we have
    \begin{align}
        &\calE_1^\ast\skakko*{\dfrac{c\Omega}{\Delta}+\dfrac{\Omega}{4}, \calL_{\Omega}}+\calE_1^\ast\skakko*{\dfrac{c\Omega}{\Delta}+\dfrac{(1+2i)\Omega}{4}, \calL_{\Omega}}\\
        &\begin{multlined}=2\mkakko*{\zeta\skakko*{\dfrac{c\Omega}{\Delta}}-\dfrac{\pi}{\Omega}\overline{\skakko*{\dfrac{c}{\Delta}}}}+\dfrac{\wp'(c\Omega/\Delta)}{2}\mkakko*{\dfrac{1}{\wp(c\Omega/\Delta)-(1+\sqrt{2})}+\dfrac{1}{\wp(c\Omega/\Delta)-(1-\sqrt{2})}}\\
        +\sqrt{2}+\mkakko*{\dfrac{2+\sqrt{2}}{\wp(c\Omega/\Delta)-(1+\sqrt{2})}-\dfrac{2-\sqrt{2}}{\wp(c\Omega/\Delta)-(1-\sqrt{2})}}.
        \end{multlined}
    \end{align}
\end{lem}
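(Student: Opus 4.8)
The plan is to pass from $\calE_1^\ast$ to the Weierstrass $\zeta$-function via the Eisenstein--Kronecker relation and then to evaluate the resulting quarter-period constants with the classical addition and duplication formulas for the functions attached to $\calL_\Omega$. Recall that for any lattice $\calL\subset\bbC$ one has
\[
\calE_1^\ast(z,\calL)=\zeta(z,\calL)-s_2(\calL)\,z-\frac{\pi\,\overline{z}}{A(\calL)},\qquad s_2(\calL)=\lim_{s\to 0}\sum_{w\in\calL}{}^{'}\frac{1}{w^2|w|^{2s}},
\]
where $A(\calL)$ is the covolume of $\calL$ (see \cite{GoldsteinSchappacher:1981}); the non-holomorphic term is precisely what makes $\calE_1^\ast(z,\calL)$ invariant under $z\mapsto z+w$, $w\in\calL$, a property visible directly from the defining series. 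For $\calL_\Omega=\Omega\calO_K$ the substitution $w\mapsto iw$ gives $s_2(\calL_\Omega)=-s_2(\calL_\Omega)$, so $s_2(\calL_\Omega)=0$, and $A(\calL_\Omega)=\Omega^2$; hence
\[
\calE_1^\ast(z,\calL_\Omega)=\zeta(z,\calL_\Omega)-\frac{\pi\,\overline{z}}{\Omega^2}.
\]
Since $E_1$ is $y^2=x^3-x$, the lattice $\calL_\Omega$ has $g_2=4$, $g_3=0$, so $\wp'^2=4\wp^3-4\wp$ and $\wp$ takes the values $1,0,-1$ at the half-periods $\Omega/2,(1+i)\Omega/2,i\Omega/2$; moreover the Legendre relation together with $\zeta(iz,\calL_\Omega)=-i\,\zeta(z,\calL_\Omega)$ forces the quasi-period $\eta_1$ (with $\zeta(z+\Omega,\calL_\Omega)=\zeta(z,\calL_\Omega)+\eta_1$) to equal $\pi/\Omega$, so $\zeta(\Omega/2)=\pi/(2\Omega)$ and $\zeta(i\Omega/2)=-i\pi/(2\Omega)$.

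Next I would set $u=c\Omega/\Delta$ and note $(1+2i)\Omega/4=\Omega/4+i\Omega/2$. Applying the displayed identity to $z=u+\Omega/4$ and to $z=u+(1+2i)\Omega/4$, and using $\overline{u}=\Omega\,\overline{(c/\Delta)}$, the two $-\pi\overline{z}/\Omega^2$ terms contribute $-\tfrac{2\pi}{\Omega}\overline{(c/\Delta)}-\tfrac{\pi(1-i)}{2\Omega}$. For the $\zeta$-part I would use the addition formula $\zeta(a+b)=\zeta(a)+\zeta(b)+\tfrac12(\wp'(a)-\wp'(b))/(\wp(a)-\wp(b))$ with $a=u$ and $b=\Omega/4$, resp.\ $b=(1+2i)\Omega/4$; this produces $2\zeta(u)$, the constant $\zeta(\Omega/4)+\zeta((1+2i)\Omega/4)$, and a sum $\tfrac12\sum_{q}(\wp'(u)-\wp'(q))/(\wp(u)-\wp(q))$ over $q\in\{\Omega/4,(1+2i)\Omega/4\}$. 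Thus the identity is reduced to the evaluation of $\wp(q),\wp'(q)$ and of $\zeta(\Omega/4)+\zeta((1+2i)\Omega/4)$.

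For $\wp(\Omega/4)$ I would use the duplication formula, which for $g_2=4,\ g_3=0$ collapses to $\wp(2z)=(\wp(z)^2+1)^2/\bigl(4\wp(z)(\wp(z)^2-1)\bigr)$; with $2z=\Omega/2$ and $\wp(\Omega/2)=1$ this gives $\wp(\Omega/4)^2-2\wp(\Omega/4)-1=0$, and since $\Omega>0$ and $\wp$ is real and decreasing on $(0,\Omega/2)$ one reads off $\wp(\Omega/4)=1+\sqrt{2}$ and $\wp'(\Omega/4)=-2(2+\sqrt{2})$ (the negative square root of $\wp'(\Omega/4)^2=8(1+\sqrt{2})^2$). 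The half-period translation formulas $\wp(z+i\Omega/2)=-1+2/(\wp(z)+1)$ and $\wp'(z+i\Omega/2)=-2\wp'(z)/(\wp(z)+1)^2$ at $z=\Omega/4$ then yield $\wp((1+2i)\Omega/4)=1-\sqrt{2}$ and $\wp'((1+2i)\Omega/4)=2(2-\sqrt{2})$. For the $\zeta$-values, the duplication formula $\zeta(2z)=2\zeta(z)+\tfrac12\wp''(z)/\wp'(z)$ with $2z=\Omega/2$ and $\wp''(\Omega/4)=6\wp(\Omega/4)^2-2=16+12\sqrt{2}$ gives $\zeta(\Omega/4)=\pi/(4\Omega)+(1+\sqrt{2})/2$, while the addition formula with $b=i\Omega/2$ (where $\wp'(i\Omega/2)=0$, $\wp(i\Omega/2)=-1$) gives $\zeta((1+2i)\Omega/4)=\zeta(\Omega/4)-i\pi/(2\Omega)-1$; hence $\zeta(\Omega/4)+\zeta((1+2i)\Omega/4)=\pi/(2\Omega)+\sqrt{2}-i\pi/(2\Omega)$. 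Substituting, the constant $\zeta(\Omega/4)+\zeta((1+2i)\Omega/4)$ together with $-\pi(1-i)/(2\Omega)$ collapses to $\sqrt{2}$; splitting $\tfrac12(\wp'(u)-\wp'(q))/(\wp(u)-\wp(q))=\tfrac12\wp'(u)/(\wp(u)-\wp(q))-\tfrac12\wp'(q)/(\wp(u)-\wp(q))$ turns the $\wp'(q)$-parts into $(2+\sqrt{2})/(\wp(u)-(1+\sqrt{2}))-(2-\sqrt{2})/(\wp(u)-(1-\sqrt{2}))$ and the $\wp'(u)$-parts into $\tfrac12\wp'(u)\bigl(1/(\wp(u)-(1+\sqrt{2}))+1/(\wp(u)-(1-\sqrt{2}))\bigr)$; collecting everything yields the asserted right-hand side. (Since $c$ is prime to the odd $\Delta$, we have $\wp(c\Omega/\Delta)\neq 1\pm\sqrt{2}$, so the right-hand side is well defined.)

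I expect the main obstacle to be bookkeeping rather than anything conceptual: one must invoke the Eisenstein--Kronecker relation in exactly the right normalization — the non-holomorphic term $-\pi\overline{z}/A(\calL)$ with the correct constant, and the vanishing $s_2(\calL_\Omega)=0$ — and one must correctly pin down $\wp(\Omega/4)=1+\sqrt{2}$ and the sign $\wp'(\Omega/4)=-2(2+\sqrt{2})$ from the reality and monotonicity of $\wp$ on $(0,\Omega/2)$, since the sign of $\wp'((1+2i)\Omega/4)$ and the sign of the additive constant $\sqrt{2}$ in the answer are both forced by this choice. Everything else is a routine application of the classical addition and duplication formulas for $\wp,\wp',\zeta$, and I would present it at that level of detail.
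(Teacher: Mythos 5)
Your proof is correct and follows essentially the same route as the paper: reduce $\calE_1^\ast(z,\calL_\Omega)$ to $\zeta(z)-\pi\bar{z}/\Omega^2$ using $s_2(\calL_\Omega)=0$, apply the $\zeta$-addition formula at $b=\Omega/4$ and $b=(1+2i)\Omega/4$, and substitute the quarter-period values. The only difference is that you derive $\wp(\Omega/4)=1+\sqrt{2}$, $\wp'(\Omega/4)=-4-2\sqrt{2}$, $\wp((1+2i)\Omega/4)=1-\sqrt{2}$, $\wp'((1+2i)\Omega/4)=4-2\sqrt{2}$ and the constant $\zeta(\Omega/4)+\zeta((1+2i)\Omega/4)-(1-i)\pi/(2\Omega)=\sqrt{2}$ from the duplication and half-period translation formulas (correctly, including the sign determination from the monotonicity of $\wp$ on $(0,\Omega/2)$), whereas the paper simply cites these values from Zhao.
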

\begin{proof}
    For a lattice $\calL=u\bbZ+v\bbZ \ (\Im(v/u)>0)$ of $\bbC$, we set
    \begin{align}
        s_2(\calL)=\lim_{s\to +0}\sum_{w\in \calL\setminus\{0\}}\dfrac{1}{w^2|w|^{2s}}, \quad A(\calL)=\dfrac{\bar{u}v-u\bar{v}}{2\pi i}.
    \end{align}
    Then, the identity $\calE_1^\ast(z, \calL)=\zeta(z, \calL)-zs_2(\calL)-\bar{z}A(\calL)^{-1}$
    holds (\textrm{cf}. \cite[p.188, Proposition 1.5]{GoldsteinSchappacher:1981}).
    It is easy to see  $s_2(\calL_{\Omega})=0$ and $A(\calL_{\Omega})=\Omega^2/\pi$.
    Hence, we see that
    \begin{align}
        \calE_1^\ast(z, \calL_{\Omega})=\zeta(z, \calL_{\Omega})-\dfrac{\pi\bar{z}}{\Omega^2}.\label{eq:Eseries1}
    \end{align}
    The addition formula
    \begin{align}
        \zeta(z_1+z_2, \calL)=\zeta(z_1, \calL)+\zeta(z_2, \calL)+\dfrac{1}{2}\dfrac{\wp'(z_1, \calL)-\wp'(z_2, \calL)}{\wp(z_1, \calL)-\wp(z_2, \calL)}
    \end{align}
    and equation \eqref{eq:Eseries1} lead to 
    \begin{align}
        \calE_1^\ast\skakko*{\dfrac{c\Omega}{\Delta}+\dfrac{\Omega}{4}, \calL_{\Omega}}&=\zeta\skakko*{\dfrac{c\Omega}{\Delta}+\dfrac{\Omega}{4}}-\dfrac{\pi}{\Omega^2}\overline{\skakko*{\dfrac{c\Omega}{\Delta}+\dfrac{\Omega}{4}}}\\
        &=\zeta\skakko*{\dfrac{c\Omega}{\Delta}}+\zeta\skakko*{\dfrac{\Omega}{4}}+\dfrac{1}{2}\dfrac{\wp'(c\Omega/\Delta)-\wp'(\Omega/4)}{\wp(c\Omega/\Delta)-\wp(\Omega/4)}-\dfrac{\pi}{4\Omega}-\dfrac{\pi}{\Omega}\overline{\skakko*{\dfrac{c}{\Delta}}}.
    \end{align}
    Similarly, we obtain
    \begin{align}
        \calE_1^\ast&\skakko*{\dfrac{c\Omega}{\Delta}+\dfrac{(1+2i)\Omega}{4}, \calL_{\Omega}}\\
        &=\zeta\skakko*{\dfrac{c\Omega}{\Delta}}+\zeta\skakko*{\dfrac{(1+2i)\Omega}{4}}+\dfrac{1}{2}\dfrac{\wp'(c\Omega/\Delta)-\wp'((1+2i)\Omega/4)}{\wp(c\Omega/\Delta)-\wp((1+2i)\Omega/4)}-\dfrac{(1-2i)\pi}{4\Omega}-\dfrac{\pi}{\Omega}\overline{\skakko*{\dfrac{c}{\Delta}}}.
    \end{align}
    Moreover from \cite[(2.7)]{Zhao:2003}, we know
    $\wp(\Omega/4)=1+\sqrt{2}, \wp'(\Omega/4)=-4-2\sqrt{2}, \wp((1+2i)\Omega/4)=1-\sqrt{2}, \wp'((1+2i)\Omega/4)=4-2\sqrt{2}$ and
    \begin{align}
        \zeta\skakko*{\dfrac{\Omega}{4}}+\zeta\skakko*{\dfrac{(1+2i)\Omega}{4}}-\dfrac{(1-i)\pi}{2\Omega}=\sqrt{2}.
    \end{align}
    By combining these results, the lemma holds.
\end{proof}

\begin{thm}[cf. \cite{BirchSwinnerton-Dyer:1965}]\label{thm:finite_sum}
    We put $\chi=\chi(D_T)=((1+i)/D_T)_4$. Then, the following holds:
    \begin{align}
        &\dfrac{\varepsilon_{T}\Delta}{\Omega}L_{2\Delta}(\overline{\psi_{-D_T}}, 1)\\
        =&
        \begin{cases}
            \dfrac{\sqrt{2}}{4}\displaystyle\sum_{c\in \calC}\skakko*{\dfrac{c}{D_T}}_4+\dfrac{1}{\sqrt{2}}\displaystyle\sum_{c\in\calC}\skakko*{\dfrac{c}{D_T}}_4\dfrac{\wp(c\Omega/\Delta)+1}{\wp(c\Omega/\Delta)^2-2\wp(c\Omega/\Delta)-1} & ((i/D_T)_4=\pm 1),\vspace{5pt}\\
            \dfrac{1}{8}\displaystyle\sum_{c\in \calC}\skakko*{\dfrac{c}{D_T}}_4\mkakko*{\dfrac{(1-i)\chi}{1-(1-i)\chi}\dfrac{\wp'(c\Omega/\Delta)}{\wp(c\Omega/\Delta)}+\dfrac{2\wp'(c\Omega/\Delta)(\wp(c\Omega/\Delta)-1)}{\wp(c\Omega/\Delta)^2-2\wp(c\Omega/\Delta)-1}} & ((i/D_T)_4=i),\vspace{5pt}\\
            \dfrac{1}{4}\displaystyle\sum_{c\in \calC}\skakko*{\dfrac{c}{D_T}}_4\dfrac{\wp'(c\Omega/\Delta)(\wp(c\Omega/\Delta)-1)}{\wp(c\Omega/\Delta)^2-2\wp(c\Omega/\Delta)-1} & ((i/D_T)_4=-i).
        \end{cases}
    \end{align}
\end{thm}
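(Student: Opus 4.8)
The plan is to compute $L_{2\Delta}(\overline{\psi_{-D_T}},1)$ via the Goldstein--Schappacher formula (Proposition \ref{prop:GS}) and then systematically exploit the actions of $\calO_K^\times$ and of $1+i$ on the index set $\calC$. First I would apply Proposition \ref{prop:GS} with $E=E_{-D_T}$, $\phi=\psi_{-D_T}$, $\frakg=4\Delta\calO_K$, $k=1$, $s=1$, and $B$ as in Lemma \ref{lem:setB}. As $E_{-D_T}$ is a quartic twist of $E_1$, its period lattice is $\gamma\calL_\Omega$ for some $\gamma\in\bbC^\times$, and I would take $\rho=\gamma\Omega/(4\Delta)$, absorbing unit ambiguity. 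Inserting the values of $\psi_{-D_T}$ on $B$ from Lemma \ref{lem:Heckechar}, applying the homogeneity $H_1(\lambda z,s,\lambda\calL)=\overline{\lambda}\,|\lambda|^{-2s}H_1(z,s,\calL)$ twice (first $\lambda=\gamma$, to descend to the lattice $\calL_\Omega$; then $\lambda=\varepsilon_T\overline{(c/D_T)_4}\in\calO_K^\times$, which fixes $\calL_\Omega$, to pull the quartic symbol out of the argument), and using $\overline{\Omega}=\Omega$ together with $L_{4\Delta}=L_{2\Delta}$ (the ideals $4\Delta\calO_K$ and $2\Delta\calO_K$ have the same prime divisors), I expect the master identity
\begin{align}
\frac{\varepsilon_T\Delta}{\Omega}L_{2\Delta}(\overline{\psi_{-D_T}},1)=\frac14\sum_{c\in\calC}\skakko*{\dfrac{c}{D_T}}_4\lkakko*{\calE_1^\ast\skakko*{\frac{c\Omega}{\Delta}+\frac{\Omega}{4},\calL_\Omega}+\calE_1^\ast\skakko*{\frac{c\Omega}{\Delta}+\frac{(1+2i)\Omega}{4},\calL_\Omega}}.
\end{align}

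I would then feed Lemma \ref{lem:Eseries} into the right-hand side, combine its two rational terms over the common denominator $\wp(c\Omega/\Delta)^2-2\wp(c\Omega/\Delta)-1$, and use \eqref{eq:Eseries1} to rewrite $2\{\zeta(c\Omega/\Delta)-(\pi/\Omega)\overline{(c/\Delta)}\}$ as $2\calE_1^\ast(c\Omega/\Delta,\calL_\Omega)$. This gives a single identity, valid in all three cases:
\begin{align}
\frac{\varepsilon_T\Delta}{\Omega}L_{2\Delta}(\overline{\psi_{-D_T}},1)=\frac14\sum_{c\in\calC}\skakko*{\dfrac{c}{D_T}}_4\lkakko*{2\calE_1^\ast\skakko*{\frac{c\Omega}{\Delta},\calL_\Omega}+\sqrt2+\frac{\wp'(\wp-1)}{\wp^2-2\wp-1}+\frac{2\sqrt2(\wp+1)}{\wp^2-2\wp-1}},
\end{align}
with $\wp=\wp(c\Omega/\Delta)$, $\wp'=\wp'(c\Omega/\Delta)$. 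Now each substitution $c\mapsto uc$ with $u\in\calO_K^\times$ permutes $(\calO_K/\Delta\calO_K)^\times$ (as $\Delta$ is prime to $2$), and every summand above is a function of $c$ through $\calO_K/\Delta\calO_K$, so these substitutions leave each sum unchanged. Using $\calE_1^\ast(uz,\calL_\Omega)=\overline{u}\,\calE_1^\ast(z,\calL_\Omega)$, $\wp(uz,\calL_\Omega)=u^{-2}\wp(z,\calL_\Omega)$, $\wp'(uz,\calL_\Omega)=u^{-3}\wp'(z,\calL_\Omega)$, $(uc/D_T)_4=(u/D_T)_4(c/D_T)_4$, and $(-1/D_T)_4=((i/D_T)_4)^2$: the choice $u=-1$ forces $\sum_c(c/D_T)_4\big[\sqrt2+2\sqrt2(\wp+1)/(\wp^2-2\wp-1)\big]=0$ when $(i/D_T)_4=\pm i$, and forces $\sum_c(c/D_T)_4\calE_1^\ast(c\Omega/\Delta,\calL_\Omega)=\sum_c(c/D_T)_4\,\wp'(\wp-1)/(\wp^2-2\wp-1)=0$ when $(i/D_T)_4=\pm1$; the choice $u=i$ forces $\sum_c(c/D_T)_4\calE_1^\ast(c\Omega/\Delta,\calL_\Omega)=0$ whenever $(i/D_T)_4\neq i$. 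Substituting these vanishings into the universal identity produces exactly the stated formula when $(i/D_T)_4=\pm1$ (only the last two summands survive) and when $(i/D_T)_4=-i$ (only $\wp'(\wp-1)/(\wp^2-2\wp-1)$ survives).

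The hard part will be the case $(i/D_T)_4=i$: here $E_{-D_T}$ has good reduction at $(1+i)$ (Proposition \ref{prop:Tate_alg}), the sum $A:=\sum_c(c/D_T)_4\calE_1^\ast(c\Omega/\Delta,\calL_\Omega)$ does not vanish, and it must be re-expressed. The device I would use is the division relation
\begin{align}
\calE_1^\ast((1+i)z,\calL_\Omega)=(1-i)\calE_1^\ast(z,\calL_\Omega)+\frac{1-i}{4}\cdot\frac{\wp'(z)}{\wp(z)},
\end{align}
which I would prove by writing $H_1((1+i)z,s,\calL_\Omega)=(1-i)2^{-s}H_1(z,s,(1+i)^{-1}\calL_\Omega)$ via homogeneity, splitting $(1+i)^{-1}\calL_\Omega=\calL_\Omega\sqcup\big(\tfrac{(1-i)\Omega}{2}+\calL_\Omega\big)$, passing to $s=1$, applying the $\zeta$-addition formula from the proof of Lemma \ref{lem:Eseries}, and using that $\tfrac{(1-i)\Omega}{2}$ is a nonzero $2$-torsion point of $\bbC/\calL_\Omega$, at which $\wp$ and $\wp'$ vanish (it is the $e=0$ two-torsion value of $y^2=x^3-x$) and at which $\calE_1^\ast$ vanishes too (the defining series of $H_1(z_0,s,\calL)$ is anti-invariant under $w\mapsto-2z_0-w$ once $2z_0\in\calL$). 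Since $c\mapsto(1+i)c$ also permutes $(\calO_K/\Delta\calO_K)^\times$, substituting it into $A$ and using $((1+i)c/D_T)_4=\chi(c/D_T)_4$ and the division relation gives $A=(1-i)\chi\,A+\tfrac{(1-i)\chi}{4}W$ with $W:=\sum_c(c/D_T)_4\,\wp'(c\Omega/\Delta)/\wp(c\Omega/\Delta)$; since $1-(1-i)\chi\neq0$ (as $|(1-i)\chi|=\sqrt2$), this yields $A=\tfrac14\cdot\tfrac{(1-i)\chi}{1-(1-i)\chi}\,W$. Plugging this into the universal identity — with the $\sqrt2$-terms already killed by $u=-1$, since here $(-1/D_T)_4=-1$ — collapses the right-hand side to $\tfrac18\sum_c(c/D_T)_4\big[\tfrac{(1-i)\chi}{1-(1-i)\chi}\tfrac{\wp'}{\wp}+\tfrac{2\wp'(\wp-1)}{\wp^2-2\wp-1}\big]$, the second case. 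The two places demanding care are the quartic-twist bookkeeping in the first step (the precise relation between the period lattice of $E_{-D_T}$ and $\calL_\Omega$, and the tracking of units of $\calO_K$) and the $2$-torsion evaluations underpinning the division relation.
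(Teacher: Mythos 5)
Your proposal is correct and follows essentially the same route as the paper: apply Proposition \ref{prop:GS} with the set $B$ of Lemma \ref{lem:setB} and the character values of Lemma \ref{lem:Heckechar}, descend to $\calL_\Omega$ by homogeneity, insert Lemma \ref{lem:Eseries}, and then settle the three cases by the substitutions $c\mapsto -c$, $c\mapsto ic$, $c\mapsto (1+i)c$, the last via the relation $\calE_1^\ast((1+i)z,\calL_\Omega)=(1-i)\calE_1^\ast(z,\calL_\Omega)+\tfrac{1-i}{4}\wp'(z)/\wp(z)$, which the paper obtains in-line from the $\zeta$-addition formula rather than from your (equally valid) coset decomposition of $H_1$. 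All constants and the resulting functional equation $A=(1-i)\chi A+\tfrac{(1-i)\chi}{4}W$ match the paper's computation.
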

\begin{proof}
    Take $\Omega_{T}\in \bbC^\times$ so that the period lattice of the elliptic curve
    $E_{-D_T}: y^2=x^3+D_{T}x$ is $\Omega_{T} \calO_K$ and set $\alpha=\Omega/\Omega_{T}$.
    In Proposition \ref{prop:GS}, substituting $k=s=1, \frakg=(4\Delta), \rho=\Omega_{T}/(4\Delta), \calL=\Omega_{T}\calO_K$ leads to
    \begin{align}
        \dfrac{4\Delta}{\Omega_{T}}L_{2\Delta}(\overline{\psi_{-D_T}}, 1)
        =\sum_{\frakb\in B}\calE_1^\ast\skakko*{\psi_{-D_T}(\frakb)\dfrac{\Omega_{T}}{4\Delta}, \Omega_{T}\calO_K}\label{eq:finite_sum1}.
    \end{align}
    Moreover, by using Lemma \ref{lem:setB} and Lemma \ref{lem:Heckechar},
    the right-hand side of the equation \eqref{eq:finite_sum1} can be calculated as
    \begin{align}
        \sum_{c\in \calC}\calE_1^\ast\skakko*{\varepsilon_T\overline{\skakko*{\dfrac{c}{D_T}}_4}\dfrac{4c+\Delta}{4\Delta}\dfrac{\Omega}{\alpha}, \dfrac{\Omega}{\alpha}\calO_K}+\sum_{c\in \calC}\calE_1^\ast\skakko*{\varepsilon_T\overline{\skakko*{\dfrac{c}{D_T}}_4}\dfrac{4c+(1+2i)\Delta}{4\Delta}\dfrac{\Omega}{\alpha}, \dfrac{\Omega}{\alpha}\calO_K}.
    \end{align}
    Note that for $\lambda\in \bbC^\times$ and a lattice $\calL$ of $\bbC$,
    $\calE_1^\ast(\lambda z, \lambda \calL)=\lambda^{-1}\calE_1^\ast(z, \calL)$ holds.
    Thus, by Lemma \ref{lem:Eseries}, we have
    \begin{align}
        \dfrac{\varepsilon_T\Delta}{\Omega}L_{2\Delta}(\overline{\psi_{-D_T}}, 1)
        &=\dfrac{1}{4}\sum_{c\in \calC}\skakko*{\dfrac{c}{D_T}}_4\mkakko*{\calE_1^\ast\skakko*{\dfrac{c\Omega}{\Delta}+\dfrac{\Omega}{4}, \calL_{\Omega}}+\calE_1^\ast\skakko*{\dfrac{c\Omega}{\Delta}+\dfrac{(1+2i)\Omega}{4}, \calL_{\Omega}}}\\
        &=\dfrac{1}{4}\sum_{c\in \calC}\skakko*{\dfrac{c}{D_T}}_4(f_1(c)+f_2(c)+g(c)),\label{eq:finite_sum2}
    \end{align}
    where 
    \begin{align}
        &f_1(c)=2\mkakko*{\zeta\skakko*{\dfrac{c\Omega}{\Delta}}-\dfrac{\pi}{\Omega}\overline{\skakko*{\dfrac{c}{\Delta}}}},\\
        &f_2(c)=\dfrac{\wp'(c\Omega/\Delta)}{2}\mkakko*{\dfrac{1}{\wp(c\Omega/\Delta)-(1+\sqrt{2})}+\dfrac{1}{\wp(c\Omega/\Delta)-(1-\sqrt{2})}},\\
        &g(c)=\sqrt{2}+\mkakko*{\dfrac{2+\sqrt{2}}{\wp(c\Omega/\Delta)-(1+\sqrt{2})}-\dfrac{2-\sqrt{2}}{\wp(c\Omega/\Delta)-(1-\sqrt{2})}}.
    \end{align}
    The functions $f_1(c)$ and $f_2(c)$ are odd with respect to $c$, and $g(c)$ is even with respect to $c$.
    We prove by cases according to the value $(i/D_T)_4$.

    First, we consider the case of $(i/D_T)_4=\pm 1$. Since $(-1/D_T)_4=1$,
    the function $(c/D_T)_4$ is even with respect to $c$.
    We can take $\calC$ so that if $c\in \calC$, then $-c\in \calC$ because of $(2, \Delta)=1$.
    Thus $\sum_{c}(c/D_T)_4f_1(c)$ and $\sum_{c}(c/D_T)_4f_2(c)$ must be equal to $0$.
    Next, we consider the case of $(i/D_T)_4=-i$. Since $(-1/D_T)_4=-1$,
    the function $(c/D_T)_4$ is odd with respect to $c$.
    As in the previous case, $\sum_{c}(c/D_T)_4g(c)$ is equal to $0$.
    Furthermore, we can take $\calC$ so that if $c\in \calC$, then $ic\in \calC$.
    Then, the value
    \begin{align}
        &\skakko*{\dfrac{c}{D_T}}_4\mkakko*{\zeta\skakko*{\dfrac{c\Omega}{\Delta}}-\dfrac{\pi}{\Omega}\overline{\skakko*{\dfrac{c}{\Delta}}}}+\skakko*{\dfrac{ic}{D_T}}_4\mkakko*{\zeta\skakko*{\dfrac{ic\Omega}{\Delta}}-\dfrac{\pi}{\Omega}\overline{\skakko*{\dfrac{ic}{\Delta}}}}
    \end{align}
    is equal to $0$. Hence, we have $\sum_{c}(c/D_T)_4f_1(c)=0$.
    Finally, we consider the case of $(i/D_T)_4=i$. Note that the value
    \begin{align}
        \sum_{c\in \calC}\skakko*{\dfrac{c}{D_T}}_4\mkakko*{\zeta\skakko*{\dfrac{c\Omega}{\Delta}}-\dfrac{\pi}{\Omega}\overline{\skakko*{\dfrac{c}{\Delta}}}}
    \end{align}
    does not depend on the choice of $\calC$.
    In fact, we can show it by using the identities $\zeta(z+1)=\zeta(z)+\pi$ and $\zeta(z+i)=\zeta(z)-\pi i$.
    Therefore, the transformation $c\mapsto (1+i)c$ leads to
    \begin{align}
        &\sum_{c\in \calC}\skakko*{\dfrac{c}{D_T}}_4\mkakko*{\zeta\skakko*{\dfrac{c\Omega}{\Delta}}-\dfrac{\pi}{\Omega}\overline{\skakko*{\dfrac{c}{\Delta}}}}\\
        &=\sum_{c\in \calC}\skakko*{\dfrac{(1+i)c}{D_T}}_4\mkakko*{\zeta\skakko*{\dfrac{(1+i)c\Omega}{\Delta}}-\dfrac{\pi}{\Omega}\overline{\skakko*{\dfrac{(1+i)c}{\Delta}}}}\\
        &=\chi\sum_{c\in \calC}\skakko*{\dfrac{c}{D_T}}_4\mkakko*{\zeta\skakko*{\dfrac{c\Omega}{\Delta}}+\zeta\skakko*{\dfrac{ic\Omega}{\Delta}}+\dfrac{1}{2}\dfrac{\wp'(c\Omega/\Delta)-\wp'(ic\Omega/\Delta)}{\wp(c\Omega/\Delta)-\wp(ic\Omega/\Delta)}-\dfrac{(1-i)\pi}{\Omega}\overline{\skakko*{\dfrac{c}{\Delta}}}}\\
        &=(1-i)\chi \sum_{c\in \calC}\skakko*{\dfrac{c}{D_T}}_4\mkakko*{\zeta\skakko*{\dfrac{c\Omega}{\Delta}}+\dfrac{1}{4}\dfrac{\wp'(c\Omega/\Delta)}{\wp(c\Omega/\Delta)}-\dfrac{\pi}{\Omega}\overline{\skakko*{\dfrac{c}{\Delta}}}}\\
        &=(1-i)\chi\sum_{c\in \calC}\skakko*{\dfrac{c}{D_T}}_4\mkakko*{\zeta\skakko*{\dfrac{c\Omega}{\Delta}}-\dfrac{\pi}{\Omega}\overline{\skakko*{\dfrac{c}{\Delta}}}}+\dfrac{(1-i)\chi}{4}\sum_{c\in \calC}\skakko*{\dfrac{c}{D_T}}_4\dfrac{\wp'(c\Omega/\Delta)}{\wp(c\Omega/\Delta)}.
    \end{align}
    Thus, we see that
    \begin{align}
        \sum_{c\in \calC}\skakko*{\dfrac{c}{D_T}}_4\mkakko*{\zeta\skakko*{\dfrac{c\Omega}{\Delta}}-\dfrac{\pi}{\Omega}\overline{\skakko*{\dfrac{c}{\Delta}}}}=\dfrac{(1-i)\chi}{1-(1-i)\chi}\dfrac{1}{4}\sum_{c\in \calC}\skakko*{\dfrac{c}{D_T}}_4\dfrac{\wp'(c\Omega/\Delta)}{\wp(c\Omega/\Delta)}.
    \end{align}
    We substitute these results into \eqref{eq:finite_sum2} and the theorem follows.
\end{proof}

We set $\calP(c)=\wp(c\Omega/\Delta), \calP'(c)=\wp'(c\Omega/\Delta)$
and $L_{2\Delta}^{\ast}(\overline{\psi_{-D_T}}, 1)=\varepsilon_T \Delta L_{2\Delta}(\overline{\psi_{-D_T}}, 1)$ for simplicity.
Note that we have
\begin{align}
    v_2\skakko*{\dfrac{L_{2\Delta}^{\ast}(\overline{\psi_{-D_T}}, 1)}{\Omega}}
    =v_2\skakko*{\dfrac{L_{2\Delta}(\overline{\psi_{-D_T}}, 1)}{\Omega}}.
\end{align}
As in the proof of Theorem \ref{thm:finite_sum}, we take $\calC$ so that if $c\in \calC$, then $-c, \pm ic\in \calC$.
Let $V$ be the subset of $\calC$ consisting of all primary elements, that is,
\begin{align}
    V=\mkakko*{c\in \calC\mid c\equiv 1\bmod 2+2i}.
\end{align}
We can rewrite the sums over $\calC$ in Theorem \ref{thm:finite_sum} as the sums over $V$.
For example if $(i/D_T)_4=1$, then we have
\begin{align}
    \dfrac{1}{\sqrt{2}}\sum_{c\in \calC}\skakko*{\dfrac{c}{D_T}}_4\dfrac{\calP(c)+1}{\calP(c)^2-2\calP(c)-1}=\sum_{c\in V}\skakko*{\dfrac{c}{D_T}}_4\dfrac{2\sqrt{2}(3\calP(c)^2-1)}{(\calP(c)^2-2\calP(c)-1)(\calP(c)^2+2\calP(c)-1)}.
\end{align}
The same calculation yields the following corollary.

\begin{cor}\label{cor:finite_sum_cor}
    Under the same conditions as Theorem \ref{thm:finite_sum}, we have
    \begin{align}
        &\dfrac{L_{2\Delta}^{\ast}(\overline{\psi_{-D_T}}, 1)}{\Omega}\\
        =&
        \begin{cases}
            \dfrac{\sqrt{2}}{4}\displaystyle\sum_{c\in \calC}\skakko*{\dfrac{c}{D_T}}_4+\displaystyle\sum_{c\in V}\skakko*{\dfrac{c}{D_T}}_4\dfrac{2\sqrt{2}(3\calP(c)^2-1)}{(\calP(c)^2-2\calP(c)-1)(\calP(c)^2+2\calP(c)-1)} & ((i/D_T)_4=1),\\
            \dfrac{\sqrt{2}}{4}\displaystyle\sum_{c\in \calC}\skakko*{\dfrac{c}{D_T}}_4+\displaystyle\sum_{c\in V}\skakko*{\dfrac{c}{D_T}}_4\dfrac{2\sqrt{2}\calP(c)(\calP(c)^2+1)}{(\calP(c)^2-2\calP(c)-1)(\calP(c)^2+2\calP(c)-1)} & ((i/D_T)_4=-1),\\
            \displaystyle\sum_{c\in V}\skakko*{\dfrac{c}{D_T}}_4\dfrac{\calP'(c)}{\calP(c)}\dfrac{\chi(\calP(c)^4-6\calP(c)^2+1)+(\calP(c)^3+\calP(c))}{(\calP(c)^2-2\calP(c)-1)(\calP(c)+2\calP(c)-1)} & ((i/D_T)_4=i),\\
            \displaystyle\sum_{c\in V}\skakko*{\dfrac{c}{D_T}}_4\dfrac{\calP'(c)(\calP(c)^2+1)}{(\calP(c)^2-2\calP(c)-1)(\calP(c)^2+2\calP(c)-1)} & ((i/D_T)_4=-i).
        \end{cases}
    \end{align}
\end{cor}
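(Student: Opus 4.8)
The plan is to use that the complete system of representatives $\calC$ has been chosen stable under multiplication by the units $\calO_K^\times=\{\pm1,\pm i\}$, so that $\calC$ decomposes as the disjoint union of orbits $\{c,ic,-c,-ic\}$, and that each such orbit contains exactly one element of $V$, i.e.\ exactly one primary element. Here one uses that every orbit has exactly four elements, which follows from $\gcd(\Delta,2)=1$ (if $c\equiv ic$, $-c$ or $-ic$ modulo $\Delta$, then $\Delta$ would divide $c(1-i)$, $2c$ or $c(1+i)$, all coprime to $\Delta$, forcing $\Delta$ to be a unit), together with the standard fact — already quoted in the text — that among the four associates of any element of $\calO_K$ coprime to $2$ exactly one is congruent to $1$ modulo $2+2i$. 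Consequently every sum $\sum_{c\in\calC}$ occurring in Theorem \ref{thm:finite_sum}, apart from the bare character sum $\sum_{c\in\calC}(c/D_T)_4$ which is simply left as it stands, can be rewritten as $\sum_{c\in V}$ of the four terms attached to the orbit of $c$, and the corollary reduces to evaluating these four-term orbit contributions.

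The ingredients for collapsing an orbit are: (i) $\wp$ is even and $\wp'$ is odd, so $\calP(-c)=\calP(c)$ and $\calP'(-c)=-\calP'(c)$; (ii) since $E_1$ has $g_3=0$, its period lattice $\calL_{\Omega}=\Omega\calO_K$ satisfies $i\calL_{\Omega}=\calL_{\Omega}$, and the homogeneity relations $\wp(\lambda z,\lambda\calL)=\lambda^{-2}\wp(z,\calL)$ and $\wp'(\lambda z,\lambda\calL)=\lambda^{-3}\wp'(z,\calL)$ give $\calP(ic)=-\calP(c)$ and $\calP'(ic)=i\,\calP'(c)$; and (iii) multiplicativity of the quartic residue symbol, which gives $(ic/D_T)_4=(i/D_T)_4(c/D_T)_4$ and $(-c/D_T)_4=(i/D_T)_4^{2}(c/D_T)_4$, while $\chi=\chi(D_T)$ is a constant independent of $c$.

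I would then argue case by case on $(i/D_T)_4$. When $(i/D_T)_4=\pm1$ the symbol $(c/D_T)_4$ is constant on each orbit, the contributions of $c$ and $-c$ (respectively of $ic$ and $-ic$) coincide, and the two surviving summands combine, after clearing denominators, into the displayed rational function — e.g.\ $\tfrac{1}{\sqrt2}\bigl(\tfrac{\calP(c)+1}{\calP(c)^2-2\calP(c)-1}+\tfrac{1-\calP(c)}{\calP(c)^2+2\calP(c)-1}\bigr)=\tfrac{2\sqrt2(3\calP(c)^2-1)}{(\calP(c)^2-2\calP(c)-1)(\calP(c)^2+2\calP(c)-1)}$, as noted in the text. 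When $(i/D_T)_4=\pm i$ the symbol changes sign under $c\mapsto ic$, but this sign is cancelled by $\calP'(ic)/\calP(ic)=-i\,\calP'(c)/\calP(c)$ (respectively by $\calP(c)\mapsto-\calP(c)$), so again the four orbit terms reduce to two pairs and combine by the same denominator-clearing; in the subcase $(i/D_T)_4=i$ one must sum the $\wp'/\wp$-term and the $\wp'(\wp-1)/(\wp^2-2\wp-1)$-term over the orbit simultaneously and then separate the $\chi$-dependent and $\chi$-independent parts to reach $\tfrac{\calP'(c)}{\calP(c)}\cdot\tfrac{\chi(\calP(c)^4-6\calP(c)^2+1)+(\calP(c)^3+\calP(c))}{(\calP(c)^2-2\calP(c)-1)(\calP(c)^2+2\calP(c)-1)}$.

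I expect no conceptual difficulty; the only work is the polynomial bookkeeping in each case, and the main obstacle is the $(i/D_T)_4=i$ case, where a fixed constant $\chi$ and two distinct rational functions of $\calP(c)$ must be handled together. Everything else is mechanical, and the chosen symmetry of $\calC$ guarantees that all the cancellations used — pairing $c$ with $-c$ and $ic$ with $-ic$, or $c$ with $ic$ — are legitimate.
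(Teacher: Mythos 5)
Your proposal is correct and follows essentially the same route as the paper, which likewise collapses the sums over $\calC$ onto the primary representatives $V$ using the $\calO_K^\times$-stability of $\calC$, the relations $\calP(ic)=-\calP(c)$, $\calP'(ic)=i\calP'(c)$, and the multiplicativity of the quartic symbol, and then clears denominators case by case. (Only a cosmetic slip: in your illustrative identity for $(i/D_T)_4=1$ each of the two surviving summands should carry a factor $2$, since the orbit has four elements; with that factor the displayed right-hand side $2\sqrt{2}(3\calP(c)^2-1)/((\calP(c)^2-2\calP(c)-1)(\calP(c)^2+2\calP(c)-1))$ is exactly what comes out.)
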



\section{2-adic valuation}\label{sec:2AdicValuation}

In Corollary \ref{cor:finite_sum_cor}, we define
\begin{align}
    &W_1(c)=\dfrac{2\sqrt{2}(3\calP(c)^2-1)}{(\calP(c)^2-2\calP(c)-1)(\calP(c)^2+2\calP(c)-1)},\\
    &W_{-1}(c)=\dfrac{2\sqrt{2}\calP(c)(\calP(c)^2+1)}{(\calP(c)^2-2\calP(c)-1)(\calP(c)^2+2\calP(c)-1)},\\
    &W_{i}(c)=\dfrac{\calP'(c)}{\calP(c)}\dfrac{\chi(\calP(c)^4-6\calP(c)^2+1)+(\calP(c)^3+\calP(c))}{(\calP(c)^2-2\calP(c)-1)(\calP(c)+2\calP(c)-1)},\\
    &W_{-i}(c)=\dfrac{\calP'(c)(\calP(c)^2+1)}{(\calP(c)^2-2\calP(c)-1)(\calP(c)^2+2\calP(c)-1)}.
\end{align}

\begin{lem}\label{lem:Svaluation}
    For $c\in V$, it holds that
    \begin{align}
        v_2(W_{1}(c))=v_2(W_{-1}(c))=v_2(W_{i}(c))=v_2(W_{-i}(c))=-\dfrac{1}{2}.
    \end{align}
\end{lem}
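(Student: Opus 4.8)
The plan is to reduce the whole lemma to one fact about the $2$-adic size of the $x$-coordinates of odd-order points on $E_1\colon y^2=x^3-x$, and then to feed that fact into a few classical identities for $\wp$. Throughout write $\calP=\calP(c)$, $\calP'=\calP'(c)$ and $\calP_2=\wp(2c\Omega/\Delta,\calL_\Omega)$, and fix a prime $\frakP$ of $\bar{\bbQ}$ over $(1+i)$ (via the chosen embedding $\bar{\bbQ}\hookrightarrow\bar{\bbQ_2}$). Since $\wp$ satisfies $\wp'^2=4\wp^3-4\wp$, the points $P_c=(\calP,\calP'/2)$ and $P_{2c}=(\calP_2,\wp'(2c\Omega/\Delta)/2)$ lie on $E_1$; because $c$ is prime to $\Delta$, $\Delta$ is prime to $2$ and $\Delta\notin\calO_K^\times$, the elements $c\Omega/\Delta$ and $2c\Omega/\Delta$ have the same odd order $>1$ in $\bbC/\calL_\Omega$, so $P_c$ and $P_{2c}$ are odd-order points of $E_1$ of order $>1$, and so is $\lambda(P_c)$ for any isogeny $\lambda$ of $E_1$ of degree prime to that order.

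\emph{Key step.} I claim that every point $P\in E_1(\bar{\bbQ})$ of odd order $>1$ satisfies $v_2(x(P))=0$ and $x(P)\equiv1\pmod{\frakP}$. Reducing $y^2=x^3-x$ modulo $(1+i)$ gives the plane cubic $y^2=x(x+1)^2$ over $\bbF_2=\calO_K/(1+i)$, whose only singular point is $(1,0)$; in characteristic $2$ a singularity of the shape $y^2=(\text{unit})\cdot(x-x_0)^2$ is never a node, so this singularity is a cusp, $E_1^{\mathrm{ns}}(\bar{\bbF}_2)\cong\bbG_a(\bar{\bbF}_2)$, and thus $E_1^{\mathrm{ns}}(\bar{\bbF}_2)$ has exponent $2$. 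Now $P$ cannot reduce to $O$: it would then lie in the formal group $\frakm_{\frakP}$-points $\hat{E}_1(\frakm_{\frakP})$, which has no odd torsion since multiplication by an odd integer is an automorphism of a formal group over a $\bbZ_2$-algebra. And $P$ cannot reduce to a smooth affine point: the reduction map $E_0\to E_1^{\mathrm{ns}}(\bar{\bbF}_2)$ is a group homomorphism (valid for the Weierstrass model $y^2=x^3-x$ itself, irrespective of minimality), so the reduction of $P$ would be an odd-order element of a group of exponent $2$, hence $\tilde O$, again putting $P$ in the kernel of reduction. Therefore $P$ reduces to $(1,0)$; as $x(P)$ is then $\frakP$-integral, $v_2(x(P))=0$ and $x(P)\equiv1\pmod{\frakP}$. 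In particular $v_2(\calP)=v_2(\calP_2)=0$ and $\calP\equiv\calP_2\equiv1\pmod{\frakP}$.

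\emph{Valuations of the building blocks.} The $2$-isogeny $E_1\to E_1/\{O,(0,0)\}$ acts on $x$-coordinates by $x\mapsto(x^2-1)/x$, and $E_1/\{O,(0,0)\}$, given by $y^2=x^3+4x$, is isomorphic over $K$ to $E_1$ via $x\mapsto x/(1+i)^2$; composing, $\dfrac{x(P)^2-1}{(1+i)^2x(P)}$ is the $x$-coordinate of an odd-order point of $E_1$ whenever $P$ is, hence has $v_2=0$ by the key step. Applying this to $P=P_c$ and $P=P_{2c}$ gives
\begin{align}
v_2(\calP^2-1)=v_2\big((1+i)^2\big)+v_2(\calP)=1,\qquad v_2(\calP_2^2-1)=1,
\end{align}
and then $\calP'^2=4\calP(\calP^2-1)$ yields $v_2(\calP')=\tfrac32$. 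The duplication identities for $\wp$ (with $g_2=4$, so $\wp''=6\wp^2-2=2(3\calP^2-1)$ and $\wp(2z)+2\wp(z)=\tfrac14(\wp''/\wp')^2$) read
\begin{align}
\calP_2=\frac{(\calP^2+1)^2}{4\calP(\calP^2-1)},\qquad \calP_2^2-1=\frac{(\calP^4-6\calP^2+1)^2}{16\calP^2(\calP^2-1)^2},\qquad \calP_2+2\calP=\frac{(3\calP^2-1)^2}{\calP'^2}.
\end{align}
Since $\calP_2\equiv1\pmod{\frakP}$ we have $v_2(\calP_2)=0$ and $v_2(\calP_2+2\calP)=0$, so these identities give
\begin{align}
v_2(\calP^2+1)=\tfrac32,\qquad v_2(\calP^4-6\calP^2+1)=\tfrac72,\qquad v_2(3\calP^2-1)=\tfrac32.
\end{align}
Moreover $v_2(\calP^3+\calP)=v_2(\calP)+v_2(\calP^2+1)=\tfrac32<\tfrac72=v_2\big(((1+i)/D_T)_4(\calP^4-6\calP^2+1)\big)$ because $((1+i)/D_T)_4$ is a unit, so $v_2(W_i(c)\cdot\calP(\calP^4-6\calP^2+1)/\calP')=v_2\big(((1+i)/D_T)_4(\calP^4-6\calP^2+1)+\calP^3+\calP\big)=\tfrac32$.

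\emph{Conclusion.} Using $(\calP^2-2\calP-1)(\calP^2+2\calP-1)=\calP^4-6\calP^2+1$ and $v_2(2\sqrt2)=\tfrac32$, the definitions of $W_1,W_{-1},W_i,W_{-i}$ together with the valuations above give
\begin{align}
v_2(W_1)=\tfrac32+\tfrac32-\tfrac72=-\tfrac12,\qquad v_2(W_{-1})=\tfrac32+0+\tfrac32-\tfrac72=-\tfrac12,\qquad v_2(W_i)=\tfrac32-0+\tfrac32-\tfrac72=-\tfrac12,\qquad v_2(W_{-i})=\tfrac32+\tfrac32-\tfrac72=-\tfrac12.
\end{align}
(This argument uses only that $c$ is prime to $\Delta$, not that $c\in V$.) The single genuinely non-formal point is the key step: recognizing that $E_1$ has additive, \emph{cuspidal} reduction at $(1+i)$, so that $E_1^{\mathrm{ns}}(\bar{\bbF}_2)$ has exponent $2$, and combining this with the pro-$2$ nature of the formal group to force every odd-order point down onto the singular point $(1,0)$. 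Everything after that is bookkeeping with the duplication formula for $\wp$.
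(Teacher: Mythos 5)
Your proof is correct, but it takes a genuinely different route from the paper's. The paper disposes of this lemma in a few lines by citing Birch and Swinnerton-Dyer's Lemma 5, which supplies the exact valuations $v_2(\calP(c)^2-2\calP(c)-1)=v_2(\calP(c)^2+2\calP(c)-1)=7/4$, $v_2(\calP(c)-1)=1/2$, $v_2(\calP(c)^2-1)=1$, $v_2(\calP(c)^2+1)=3/2$, and then reads off $v_2(\calP(c))=0$, $v_2(3\calP(c)^2-1)=3/2$ and $v_2(\calP'(c))=3/2$. You instead reprove the needed portion of that lemma from first principles: the observation that $y^2=x^3-x$ has cuspidal reduction at $(1+i)$, so that the smooth locus of the reduction is $\bbG_a$ of exponent $2$ while the kernel of reduction is a pro-$2$ formal group, forces every odd-order torsion point onto the singular point $(1,0)$ and hence gives $v_2(\calP(c))=0$ and $\calP(c)\equiv 1$; pushing points through the $2$-isogeny $x\mapsto (x^2-1)/((1+i)^2x)$ and through duplication then yields $v_2(\calP(c)^2-1)=1$, $v_2(\calP(c)^2+1)=3/2$, $v_2\bigl((\calP(c)^2-2\calP(c)-1)(\calP(c)^2+2\calP(c)-1)\bigr)=7/2$ and $v_2(3\calP(c)^2-1)=3/2$. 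I checked the duplication identities and the final bookkeeping; they are right, including the ultrametric point that the numerator of $W_i(c)$ has valuation exactly $3/2$ because $7/2>3/2$. You obtain slightly less than the citation does (only the valuation of the product of the two quadratics rather than $7/4$ for each factor, and only $v_2(\calP(c)-1)>0$ rather than $=1/2$), but that is all the lemma needs. The one step a referee might ask you to expand is the assertion that reduction is a group homomorphism on $E_0$ for the (possibly non-minimal) model $y^2=x^3-x$; this is indeed true, since the chord--tangent argument uses only integrality of the equation, and you are right to flag it. What your route buys is a self-contained proof making transparent that the additive reduction of $E_1$ at $(1+i)$, together with the residue characteristic being $2$, is the source of all these valuations.
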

\begin{proof}
    \cite[Lemma 5, p.90]{BirchSwinnerton-Dyer:1965} shows
    \begin{gather}
        v_2(\calP(c)^2-2\calP(c)-1)=v_2(\calP(c)^2+2\calP(c)-1)=\dfrac{7}{4},\\
        v_2(\calP(c)-1)=\dfrac{1}{2}, \quad v_2(\calP(c)^2-1)=1, \quad v_2(\calP(c)^2+1)=\dfrac{3}{2}.
    \end{gather}
    Thus, we have
    \begin{align}
        v_2(3\calP(c)^2-1)=v_2(\calP(c)^2-3)=\dfrac{3}{2}, \quad v_2(\calP(c))=0.
    \end{align}
    By the identity $\wp'(z)^2=4\wp(z)^3-4\wp(z)$, we also have $v_2(\calP'(c))=3/2$. The claim follows from here.
\end{proof}

We consider a summation over $T$ of the equations in Corollary \ref{cor:finite_sum_cor}.
Here, we divide the range of $T$ into several cases
and define $\Delta$ for each of these cases.
We define $\Delta_i$ as the radical of $D_i$; that is;
\begin{align}
    \Delta_1=\prod_{\pi_{1, i}\in S_1}\pi_{1, i}, \quad
    \Delta_2=\prod_{\pi_{2, j}\in S_2}\pi_{2, j}, \quad
    \Delta_3=\prod_{\pi_{3, k}\in S_3}\pi_{3, k}.
\end{align}
If we run $T_1$ over all subsets of $\{1, \dots, n\}$ and $T_2=T_3=\varnothing$,
then we define $\Delta$ to be $\Delta_1$. In general, we define $\Delta$ as follows:
\begin{align}
    \Delta=
    \begin{cases}
        \Delta_1 & (T_1\subset \{1, \dots, n\}, T_2=\varnothing, T_3=\varnothing),\\
        \Delta_2 & (T_1= \varnothing, T_2\subset \{1, \dots, m\}, T_3=\varnothing),\\
        \Delta_3 & (T_1= \varnothing, T_2=\varnothing, T_3\subset \{1, \dots, \ell\}),\\
        \Delta_1\Delta_2 & (T_1\subset \{1, \dots, n\}, T_2\subset \{1, \dots, m\}, T_3= \varnothing),\\
        \Delta_2\Delta_3 & (T_1= \varnothing, T_2\subset\{1, \dots, m\}, T_3\subset\{1, \dots, \ell\}),\\
        \Delta_1\Delta_3 & (T_1\subset \{1, \dots, n\}, T_2=\varnothing, T_3\subset\{1, \dots, \ell\}),\\
        \Delta_1\Delta_2\Delta_3 & (T_1\subset \{1, \dots, n\}, T_2\subset \{1, \dots, m\}, T_3\subset \{1, \dots, \ell\}).
    \end{cases}
\end{align}
Here, for example, ``$T_1\subset \{1, \dots, n\}, T_2=\varnothing, T_3=\varnothing$'' implies
``$T_1$ runs over all subsets of $\{1, \dots, n\}$, $T_2$ and $T_3$ are both empty''.
From \cite[p.514, Theorem 12]{Serre:1968}, we see that
the conductor of the elliptic curve $E_{-D_T}$
is the square of the conductor of the Hecke character $\psi_{-D_T}$.
Therefore by TABLE \ref{tb:LocalInformation} and TABLE \ref{tb:LocalInformation2},
the conductor of $\psi_{-D_T}$ divides $4\Delta\calO_K$.

\begin{lem}\label{lem:charvaluation}
    We have the following lower bound of the $2$-adic valuation:
    \begin{align}
        v_2\skakko*{\sum_T\skakko*{\dfrac{c}{D_T}}_4}\geq
        \begin{cases}
            n/2 & (T_1\subset \{1, \dots, n\}, T_2=\varnothing, T_3=\varnothing),\\
            m & (T_1=\varnothing, T_2\subset \{1, \dots, m\}, T_3=\varnothing),\\
            \ell/2 & (T_1=\varnothing, T_2=\varnothing, T_3\subset \{1, \dots, \ell\}),\\
            n/2+m & (T_1\subset \{1, \dots, n\}, T_2\subset \{1, \dots, m\}, T_3=\varnothing), \\
            m+\ell/2 & (T_1=\varnothing, T_2\subset \{1, \dots, m\}, T_3\subset \{1, \dots, \ell\}), \\
            (n+\ell)/2 & (T_1\subset \{1, \dots, n\}, T_2=\varnothing, T_3\subset \{1, \dots, \ell\}), \\
            n/2+m+\ell/2 & (T_1\subset \{1, \dots, n\}, T_2\subset \{1, \dots, m\}, T_3\subset \{1, \dots, \ell\}).
        \end{cases}
    \end{align}
\end{lem}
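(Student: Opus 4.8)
The plan is to use the multiplicativity of the quartic residue symbol to rewrite each sum $\sum_T\skakko*{c/D_T}_4$ as a product over the primes dividing $D$, and then to bound its $2$-adic valuation factor by factor.

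First I would recall the elementary identity
\[
\sum_{S\subseteq\{1,\dots,N\}}\ \prod_{i\in S}a_i=\prod_{i=1}^{N}(1+a_i),
\]
valid for elements $a_1,\dots,a_N$ of any commutative ring, where the summand for $S=\varnothing$ is the empty product $1$. Since $\skakko*{c/D_T}_4=\prod_{i\in T_1}\skakko*{c/\pi_{1,i}}_4\cdot\prod_{j\in T_2}\skakko*{c/\pi_{2,j}}_4^{2}\cdot\prod_{k\in T_3}\skakko*{c/\pi_{3,k}}_4^{3}$ and in each of the seven ranges the subsets $T_1,T_2,T_3$ vary independently (each either running over all subsets of its index set, or being forced to be empty), the sum $\sum_T\skakko*{c/D_T}_4$ splits as a product
\[
\prod_{i=1}^{n}\skakko*{1+\skakko*{c/\pi_{1,i}}_4}\cdot\prod_{j=1}^{m}\skakko*{1+\skakko*{c/\pi_{2,j}}_4^{2}}\cdot\prod_{k=1}^{\ell}\skakko*{1+\skakko*{c/\pi_{3,k}}_4^{3}},
\]
in which a given one of the three products is present exactly when the corresponding $T_t$ ranges over all subsets of its index set, and is replaced by $1$ when $T_t=\varnothing$ is imposed.

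The key point is a valuation estimate for each individual factor. In every one of the seven cases $\Delta$ has been chosen above to be the radical of precisely the part of $D$ whose primes occur in the divisors $D_T$ under consideration, so each representative $c\in\calC$ of $(\calO_K/\Delta\calO_K)^\times$ is prime to all of $\pi_{1,i},\pi_{2,j},\pi_{3,k}$; hence every symbol $\skakko*{c/\pi}_4$ is a fourth root of unity, in particular a unit. If $\zeta$ is a fourth root of unity, then $v_2(1+\zeta)\geq\tfrac12$: indeed $1+1=2$ has valuation $1$, $1+(-1)=0$ has valuation $+\infty$, and $v_2(1\pm i)=\tfrac12$ since $(1\pm i)^2=\pm 2i$. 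The same bound $v_2(1+\zeta^{3})\geq\tfrac12$ holds because $\zeta^{3}$ is again a fourth root of unity, while for the square $\zeta^{2}=\skakko*{c/\pi}_2\in\{\pm1\}$ one gets the stronger $v_2(1+\zeta^{2})\geq 1$ from $1+1=2$ and $1+(-1)=0$.

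Finally I would multiply these bounds: in the factored product the factors attached to the primes of $S_1$ each contribute at least $\tfrac12$, those attached to $S_2$ at least $1$, and those attached to $S_3$ at least $\tfrac12$. Reading off, for each of the seven ranges of $T$, which of the three products is present then yields exactly the asserted lower bounds $n/2$, $m$, $\ell/2$, $n/2+m$, $m+\ell/2$, $(n+\ell)/2$ and $n/2+m+\ell/2$. I expect no genuine obstacle here; the only step deserving explicit mention is the coprimality of $c$ with every $\pi$ that appears — without it a vanishing symbol would turn some factor into $1$ and destroy the bound — and this is exactly what the case-by-case definition of $\Delta$ preceding the lemma secures.
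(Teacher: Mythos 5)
Your proposal is correct and follows essentially the same route as the paper: the paper likewise reduces the sum to the product $\prod_i\bigl(1+\bigl(\tfrac{c}{\pi_{1,i}}\bigr)_4\bigr)$ (proving that identity by induction on $n$ rather than quoting it as the standard expansion) and leaves the factor-by-factor valuation bounds $v_2(1+\zeta)\geq\tfrac12$, $v_2(1+\zeta^2)\geq 1$ implicit. Your explicit treatment of those bounds and of the coprimality of $c$ with the relevant primes supplies exactly the details the paper omits.
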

\begin{proof}
    We only prove the case of $D_T=D_{T_1}$. We only need to show that 
    \begin{align}
        \sum_{T_1\subset \{1, \dots, n\}}\skakko*{\dfrac{c}{D_{T_1}}}_4=\mkakko*{1+\skakko*{\dfrac{c}{\pi_{1, 1}}}_4}\cdots \mkakko*{1+\skakko*{\dfrac{c}{\pi_{1, n}}}_4}.
    \end{align}
    We show by induction on $n$. Clearly, it holds for $n=1$.
    Suppose it is true for $1, \dots, n-1$. Then, we have
    \begin{align}
        \sum_{T_1\subset \{1, \dots, n\}}\skakko*{\dfrac{c}{D_{T_1}}}_4
        &=\sum_{T_1\subset \{1, \dots, n-1\}}\skakko*{\dfrac{c}{D_{T_1}}}_4+\sum_{\substack{T_1\subset \{1, \dots, n\} \\ n\in T_1}}\skakko*{\dfrac{c}{D_{T_1}}}_4\\
        &=\sum_{T_1\subset \{1, \dots, n-1\}}\skakko*{\dfrac{c}{D_{T_1}}}_4+\skakko*{\dfrac{c}{\pi_{1, n}}}_4\sum_{T_1\subset \{1, \dots, n-1\}}\skakko*{\dfrac{c}{D_{T_1}}}_4\\
        &=\mkakko*{1+\skakko*{\dfrac{c}{\pi_{1, n}}}_4}\sum_{T_1\subset \{1, \dots, n-1\}}\skakko*{\dfrac{c}{D_{T_1}}}_4\\
        &=\mkakko*{1+\skakko*{\dfrac{c}{\pi_{1, 1}}}_4}\cdots \mkakko*{1+\skakko*{\dfrac{c}{\pi_{1, n}}}_4},
    \end{align}
where the last equality follows from the induction hypothesis.
Thus, it is true for $n$. This completes the proof. 
\end{proof}

\begin{prop}\label{prop:finitesumvaluation}
The following holds:
    \begin{align}
        &v_2\skakko*{\sum_{T}\dfrac{L_{2\Delta}^{\ast}(\overline{\psi_{-D_T}}, 1)}{\Omega}}\\
        &\geq
        \begin{cases}
            \dfrac{n-1}{2} & (T_1\subset \{1, \dots, n\}, T_2=\varnothing, T_3=\varnothing),\vspace{2pt}\\
            \dfrac{2m-1}{2} & (T_1=\varnothing, T_2\subset \{1, \dots, m\}, T_3=\varnothing),\vspace{2pt}\\
            \dfrac{\ell-1}{2} & (T_1=\varnothing, T_2=\varnothing, T_3\subset \{1, \dots, \ell\}),\vspace{2pt}\\
            \dfrac{n+2m-1}{2} & (T_1\subset \{1, \dots, n\}, T_2\subset \{1, \dots, m\}, T_3=\varnothing),\vspace{2pt}\\
            \dfrac{2m+\ell-1}{2} & (T_1=\varnothing, T_2\subset \{1, \dots, m\}, T_3\subset \{1, \dots, \ell\}),\vspace{2pt}\\
            \dfrac{n+\ell-1}{2} & (T_1\subset \{1, \dots, n\}, T_2=\varnothing, T_3\subset \{1, \dots, \ell\}),\vspace{2pt}\\
            \dfrac{n+2m+\ell-1}{2} & (T_1\subset \{1, \dots, n\}, T_2\subset \{1, \dots, m\}, T_3\subset \{1, \dots, \ell\}).
        \end{cases}
    \end{align}
\end{prop}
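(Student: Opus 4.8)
The plan is to sum the closed formula of Corollary~\ref{cor:finite_sum_cor} over the prescribed range of $T$, interchange the order of summation, and then estimate the $2$-adic valuation of the resulting pieces: a ``character-sum'' piece, governed by Lemma~\ref{lem:charvaluation}, and a piece controlled term by term via Lemma~\ref{lem:Svaluation}. Throughout I fix $\calC$ as in the paragraph after Theorem~\ref{thm:finite_sum}, so that $\calC$ is stable under $c\mapsto -c$ and $c\mapsto ic$, with $V\subset\calC$ the set of primary representatives.

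First I would rewrite Corollary~\ref{cor:finite_sum_cor} in the uniform shape
\begin{align}
    \frac{L_{2\Delta}^{\ast}(\overline{\psi_{-D_T}},1)}{\Omega}
    =\frac{\sqrt{2}}{4}\sum_{c\in\calC}\skakko*{\frac{c}{D_T}}_4
    +\sum_{c\in V}\skakko*{\frac{c}{D_T}}_4\, W_{(i/D_T)_4}(c),
\end{align}
valid for every $T$ if the first term is taken to be $0$ unless $(i/D_T)_4=\pm 1$, and where $W_{(i/D_T)_4}$ is one of $W_1,W_{-1},W_i,W_{-i}$. Summing over $T$ and interchanging the sums splits the total into a ``constant'' part $\frac{\sqrt{2}}{4}\sum_T\sum_{c\in\calC}(c/D_T)_4$ and a ``main'' part $\sum_{c\in V}\bigl(\sum_T(c/D_T)_4\, W_{(i/D_T)_4}(c)\bigr)$. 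For the constant part I would note that $(\,\cdot\,/D_T)_4$ is a non-trivial character modulo $\Delta$ whenever $D_T\neq 1$, so $\sum_{c\in\calC}(c/D_T)_4=0$ by orthogonality; the only surviving $T$ is the one with all $T_i=\varnothing$, contributing $\frac{\sqrt{2}}{4}\,\#(\calO_K/\Delta\calO_K)^{\times}$. As $\Delta$ is odd, $\#(\calO_K/\Delta\calO_K)^{\times}=\prod_{\pi\mid\Delta}(N\pi-1)$ with $v_2(N\pi-1)\ge 2$ for each such $\pi$, and counting the primes of $\Delta$ in each of the seven ranges shows that this single term already meets the asserted bound.

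The core of the proof is then a lower bound, for each fixed $c\in V$, of $v_2\bigl(\sum_T(c/D_T)_4\, W_{(i/D_T)_4}(c)\bigr)$. The obstacle is that the rational function $W_{(i/D_T)_4}$ depends on $T$ through $(i/D_T)_4$, and in the case $(i/D_T)_4=i$ also through $\chi(D_T)=((1+i)/D_T)_4$, so no single $W$ can be pulled out of the sum over $T$. I would handle this by first isolating that dependence into terms of larger $2$-adic valuation, using the identities $W_i(c)=\chi(D_T)\,\calP'(c)/\calP(c)+W_{-i}(c)$ and $W_1(c)=W_{-1}(c)-2\sqrt{2}\,(\calP(c)-1)/(\calP(c)^2+2\calP(c)-1)$, and then separating the subsets $T$ according to the values of $(i/D_T)_4$ and $\chi(D_T)$ by inserting the orthogonality idempotents for the characters of $\mu_4$. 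The point is that $(c/D_T)_4$ times any power of $(i/D_T)_4$, and $(c/D_T)_4\,\chi(D_T)$, are again multiplicative functions of $T$, so each sum over $T$ that arises is of exactly the shape treated in the proof of Lemma~\ref{lem:charvaluation}; feeding the valuations thereby obtained into $v_2(W_\bullet(c))=-\tfrac12$ and $v_2(\calP'(c)/\calP(c))=\tfrac32$ (Lemma~\ref{lem:Svaluation}) gives the stated bound for each $c$, hence for the sum over $V$.

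Finally, the ranges in which $T_2$ and/or $T_3$ are also allowed to vary I would treat by iterating the argument: first carry out the sum over $T_1$ (for $T_2,T_3$ fixed), then over $T_2$, then over $T_3$, updating $\Delta$, $\calC$, and the relevant quartic symbols at each stage in accordance with Tables~\ref{tb:LocalInformation} and~\ref{tb:LocalInformation2}; this is the iterative form of Zhao's method announced in Section~\ref{sec:Introduction}. I expect the hardest part to be precisely this bookkeeping --- organizing the seven ranges of $T$, and the intermediate partial sums produced by the iteration, so that the correction terms and the $\mu_4$-frequency pieces combine to yield exactly the constant $-1$ appearing in the numerators of the bounds --- which is why the figures referred to in the introduction are included.
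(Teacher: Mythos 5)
Your overall skeleton --- sum Corollary \ref{cor:finite_sum_cor} over $T$, interchange the sums, kill the $\frac{\sqrt2}{4}\sum_{c\in\calC}$ piece by orthogonality except at $D_T=1$ (where $v_2(\#\calC)\geq 2n$ makes it harmless), and control the main piece by combining Lemma \ref{lem:Svaluation} with Lemma \ref{lem:charvaluation} --- is exactly the paper's. The paper, however, treats the main term by the single estimate
\begin{align}
    v_2\skakko*{\sum_{c\in V}W_{\circ}(c)\sum_{T}\skakko*{\dfrac{c}{D_T}}_4}\geq v_2(W_\circ(c))+\min_{c}v_2\skakko*{\sum_{T}\skakko*{\dfrac{c}{D_T}}_4}=-\dfrac12+\dfrac n2,
\end{align}
applying Lemma \ref{lem:charvaluation} to the \emph{unrestricted} sum $\sum_T(c/D_T)_4=\prod_i(1+(c/\pi_{1,i})_4)$; it does not perform the decomposition you propose. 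Also, for the seven ranges of $T$ no re-iteration ``updating $\Delta$ and $\calC$'' is needed here: a single $\Delta$ is fixed per range and Lemma \ref{lem:charvaluation} already gives the product bound in all variables at once. The genuine iteration is deferred to Theorem \ref{thm:2Lvaluation}.

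The gap is in your treatment of the $T$-dependence of $W_{(i/D_T)_4}$. Inserting the $\mu_4$-orthogonality idempotents replaces $\sum_{T:(i/D_T)_4=\circ}(c/D_T)_4$ by $\frac14\sum_{j=0}^{3}\bar{\circ}^{\,j}\prod_i\bigl(1+(ci^j/\pi_{1,i})_4\bigr)$. Each product is indeed of the shape of Lemma \ref{lem:charvaluation} and has valuation $\geq n/2$, but the factor $\frac14$ costs $2$ in $v_2$, and the restricted sums themselves do \emph{not} satisfy the $n/2$ bound: already for $n=1$ with $(i/\pi_{1,1})_4\neq 1$, the sum restricted to $\circ=1$ is the single term $1$, of valuation $0<\tfrac12$. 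So ``feeding the valuations thereby obtained into $v_2(W_\bullet(c))=-\tfrac12$'' yields a bound weaker than the asserted $(n-1)/2$ by up to $2$. Your correction identities are correct but do not close this hole: $W_i-W_{-i}=\chi\,\calP'(c)/\calP(c)$ has valuation $3/2$ and is fine, but $W_1+W_{-1}=2\sqrt2(\calP(c)+1)/(\calP(c)^2-2\calP(c)-1)$ has valuation exactly $\tfrac32+\tfrac12-\tfrac74=\tfrac14$, so the Fourier coefficient $\frac14(W_1+W_{-1}+W_i+W_{-i})$ can have valuation $-7/4$, far below the $-1/2$ your argument needs for each frequency. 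To make your route work you would need additional $2$-adic congruences among $W_1,W_{-1},W_i,W_{-i}$ (equivalently, lower bounds on all four twisted combinations $\sum_\circ\bar\circ^{\,j}W_\circ(c)$ of the order $2^{3/2}$), and no such input is provided by Lemma \ref{lem:Svaluation} or by the identities you state.
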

\begin{proof}
    We only prove the case of $D_T=D_{T_1}$.
    Consider the summation over $T_1$ for the equations in Corollary \ref{cor:finite_sum_cor}.
    Then, we have $\Delta=\Delta_1$ and
    \begin{align}
        \sum_{T_1}\dfrac{L_{2\Delta_1}^{\ast}(\overline{\psi_{-D_{T_1}}}, 1)}{\Omega}&=
        \begin{cases}
            \displaystyle\dfrac{\sqrt{2}}{4}\sum_{T_1}\sum_{c\in \calC}\skakko*{\dfrac{c}{D_{T_1}}}_4+\displaystyle\sum_{c\in V}W_1(c)\sum_{T_1}\skakko*{\dfrac{c}{D_{T_1}}}_4 & ((i/D_{T_1})_4=1),\\
            \displaystyle\dfrac{\sqrt{2}}{4}\sum_{T_1}\sum_{c\in \calC}\skakko*{\dfrac{c}{D_{T_1}}}_4+\displaystyle\sum_{c\in V}W_{-1}(c)\sum_{T_1}\skakko*{\dfrac{c}{D_{T_1}}}_4 & ((i/D_{T_1})_4=-1),\\
            \displaystyle\sum_{c\in V}W_{i}(c)\sum_{T_1}\skakko*{\dfrac{c}{D_{T_1}}}_4 & ((i/D_{T_1})_{4}=i),\\
            \displaystyle\sum_{c\in V}W_{-i}(c)\sum_{T_1}\skakko*{\dfrac{c}{D_{T_1}}}_4 & ((i/D_{T_1})_{4}=-i),
        \end{cases}
    \end{align}
    where $T_1$ runs over all subsets of $\{1, \dots, n\}$.
    By Lemma \ref{lem:Svaluation} and Lemma \ref{lem:charvaluation},
    for any $\circ\in \{\pm 1, \pm i\}$, we have
    \begin{align}
        v_2\skakko*{\sum_{c\in V}W_{\circ}(c)\sum_{T_1}\skakko*{\dfrac{c}{D_{T_1}}}_4}&\geq \min_{c\in V}\mkakko*{v_2\skakko*{W_{\circ}(c)}+ v_2\skakko*{\sum_{T_1}\skakko*{\dfrac{c}{D_{T_1}}}_4}}\\
        &=-\dfrac{1}{2}+\dfrac{n}{2}.
    \end{align}
    Since 
    \begin{align}
        \sum_{c\in \calC}\skakko*{\dfrac{c}{D_{T_1}}}_4=
        \begin{cases}
        0 & (T_1\neq \varnothing),\\
        \# \calC & (T_1=\varnothing),
        \end{cases}
    \end{align}
    it holds that
    \begin{align}
        v_2\skakko*{\dfrac{\sqrt{2}}{4}\sum_{T_1}\sum_{c\in \calC}\skakko*{\dfrac{c}{D_{T_1}}}_4}=v_2(\# \calC)-\dfrac{3}{2}\geq 2n-\dfrac{3}{2}>\dfrac{n-1}{2}.
    \end{align}
    The proposition follows from this.
\end{proof}

\begin{thm}\label{thm:Lvaluation}
    Let $\psi_{-D}$ be the Hecke character associated to the elliptic curve
    $E_{-D}: y^2=x^3+Dx$ over $K$ and
    $\Omega$ is the least positive element of the period lattice of $E_1: y^2=x^3-x$.
    Then, we have
    \begin{align}
    v_2\skakko*{\dfrac{L_{2}(\overline{\psi_{-D}}, 1)}{\Omega}}\geq
    \begin{cases}
        \dfrac{n-2}{2} & (D=D_{1}^{(n)}),\vspace{2pt}\\
        \dfrac{2m-3}{2} & (D=D_{2}^{(m)}),\vspace{2pt}\\
        \dfrac{\ell-2}{2} & (D=D_{3}^{(\ell)}).
    \end{cases}
    \end{align}
\end{thm}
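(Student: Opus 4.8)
The plan is to prove each of the three bounds by Zhao's method, namely by induction on the number of distinct primes of $D$: on $n$ for $D=D_1^{(n)}$, on $m$ for $D=D_2^{(m)}$, and on $\ell$ for $D=D_3^{(\ell)}$. The engine in every case is Proposition \ref{prop:finitesumvaluation}, which bounds the $2$-adic valuation of the sum of the normalized values $L_{2\Delta}^{\ast}(\overline{\psi_{-D_T}},1)/\Omega$ over all divisors $D_T$ of $D$; the inductive step then isolates the summand attached to $D$ itself and controls the remaining ones with the inductive hypothesis.

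Take $D=D_1^{(n)}$, so that $\Delta_1=D$. By the first case of Proposition \ref{prop:finitesumvaluation},
\[
v_2\Bigl(\sum_{T_1\subseteq\{1,\dots,n\}}\frac{L_{2D}^{\ast}(\overline{\psi_{-D_{T_1}}},1)}{\Omega}\Bigr)\geq\frac{n-1}{2}.
\]
Throughout one uses $v_2(L_{2\Delta}^{\ast}(\overline{\psi_{-D_T}},1)/\Omega)=v_2(L_{2\Delta}(\overline{\psi_{-D_T}},1)/\Omega)$ (noted after Theorem \ref{thm:finite_sum}) and $v_2(L_{2D_T}(\overline{\psi_{-D_T}},1)/\Omega)=v_2(L_2(\overline{\psi_{-D_T}},1)/\Omega)$ (Proposition \ref{prop:Tate_alg}, the Euler factor of $\psi_{-D_T}$ at each bad prime $\frakp\mid D_T$ being trivial). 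Thus the summand for $T_1=\{1,\dots,n\}$ has valuation $v_2(L_2(\overline{\psi_{-D}},1)/\Omega)$, the quantity to be bounded. For a proper subset $T_1\subsetneq\{1,\dots,n\}$,
\[
\frac{L_{2D}(\overline{\psi_{-D_{T_1}}},1)}{\Omega}=\frac{L_2(\overline{\psi_{-D_{T_1}}},1)}{\Omega}\prod_{\frakp\mid D,\ \frakp\nmid D_{T_1}}\Bigl(1-\frac{\overline{\psi_{-D_{T_1}}}(\frakp)}{N\frakp}\Bigr),
\]
and, using $\psi_{-D_{T_1}}((\pi))=\overline{(-D_{T_1}/\pi)_4}\,\pi$ for primary $\pi$, each extra factor equals $(\pi-(-D_{T_1}/\pi)_4)/\pi$, of valuation $v_2(\pi-(-D_{T_1}/\pi)_4)$. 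Since $\pi\equiv1\bmod(1+i)^3$, this is $\geq1/2$ for every fourth root of unity and $\geq1$ whenever $(-D_{T_1}/\pi)_4\in\{\pm1\}$, which holds whenever $D_{T_1}$ is a perfect square (in particular when $D_{T_1}=1$), because every prime of $\calO_K$ coprime to $2$ has norm $\equiv1\bmod4$, so $(-1/\pi)_2=1$. Combining the $n-|T_1|$ extra factors with the inductive hypothesis $v_2(L_2(\overline{\psi_{-D_{T_1}}},1)/\Omega)\geq(|T_1|-2)/2$ gives valuation $\geq(|T_1|-2)/2+(n-|T_1|)/2=(n-2)/2$ for every proper $T_1$; subtracting those summands from the displayed sum then gives $v_2(L_2(\overline{\psi_{-D}},1)/\Omega)\geq(n-2)/2$.

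The cases $D=D_2^{(m)}$ and $D=D_3^{(\ell)}$ run identically, with the radicals $\Delta_2,\Delta_3$ in place of $\Delta_1$ and the second and third cases of Proposition \ref{prop:finitesumvaluation} (which give $\geq(2m-1)/2$ and $\geq(\ell-1)/2$ for the full sum). The one essential difference is $D_2^{(m)}$: every divisor $D_{T_2}=\prod_{j\in T_2}\pi_{2,j}^2$ is a perfect square, so each of the $m-|T_2|$ extra Euler factors then contributes $\geq1$ rather than $\geq1/2$, and the estimate improves to $(2|T_2|-3)/2+(m-|T_2|)=(2m-3)/2$, which matches the target and recovers Zhao's constant. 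In all three inductions the base case is the empty product $D=1$, i.e. the single curve $E_{-1}: y^2=x^3+x$ over $K$, for which one needs $v_2(L_2(\overline{\psi_{-1}},1)/\Omega)\geq-3/2$; this value is nonzero ($E_{-1}$ has rank $0$ over $K$), it equals $v_2(L(\overline{\psi_{-1}},1)/\Omega)$ since $(i/1)_4\neq i$, and it is pinned down by a finite computation --- from the classical formula for the algebraic part of $L(E_{-1},1)$, or by applying Proposition \ref{prop:GS} directly with $\frakg=(1+i)^4$ (the conductor of $\psi_{-1}$), for which $B$ has only two elements --- the latter lying just outside the framework of Section \ref{sec:ExpressFiniteSum}, which requires $\Delta\notin\calO_K^{\times}$ and hence does not literally cover $D_T=1$.

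The main obstacle is the sharp bookkeeping in the inductive step: one must check that the extra Euler factors acquired in passing from $L_{2\Delta_i}$ to $L_2$ carry exactly the right $2$-adic valuation, in particular the full $1$ in the square case $D_2^{(m)}$ (which rests on $(-D_T/\pi)_4=\pm1$ for square $D_T$), since this is precisely what makes the three constants $(n-2)/2$, $(2m-3)/2$, $(\ell-2)/2$ come out without a loss of $1/2$. A secondary nuisance is that the divisor $D_\emptyset=1$ always occurs in the sum, which forces the separate base-case treatment of the curve $E_{-1}$.
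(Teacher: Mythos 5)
Your proposal is correct and follows essentially the same route as the paper: Proposition \ref{prop:finitesumvaluation} applied to the sum over all divisors $D_T$, isolation of the top term $T=\{1,\dots,n\}$, the explicit value $L(\overline{\psi_{-1}},1)=\Omega/(2\sqrt{2})$ for the empty divisor, and the induction hypothesis combined with the Euler-factor valuations ($\geq 1/2$ in general, $\geq 1$ when $(-D_T/\pi)_4=\pm1$, in particular for square $D_T$, which is exactly what yields the improved constant $(2m-3)/2$). The only cosmetic difference is that you spell out the square-divisor observation explicitly and describe the base computation for $E_{-1}$ slightly differently, while the paper simply cites Birch--Swinnerton-Dyer for that value.
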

\begin{proof}
    We only prove the case of $D=D_{1}^{(n)}$.
    When $T_1=\{1, \dots, n\}$, we see that $E_{-D_{T_1}}=E_{-D_1^{(n)}}$
    and $L_{2\Delta_1}^\ast(\overline{\psi_{-D_{T_1}}}, 1)=L_{2}^\ast(\overline{\psi_{-D_1^{(n)}}}, 1)$ holds by Proposition \ref{prop:Tate_alg}.
    When $T_1=\varnothing$, the elliptic curve $E_{-D_{T_1}}=E_{-1}$ has bad reduction at the prime $(1+i)\calO_K$.
    Therefore, we have
    \begin{align}
        L_{2\Delta_1}^\ast(\overline{\psi_{-1}}, 1)
        =L_{\Delta_1}^\ast(\overline{\psi_{-1}}, 1)
        =L^\ast(\overline{\psi_{-1}}, 1)\prod_{i=1}^n\skakko*{1-\frac{\overline{\psi_{-1}}((\pi_{1, i}))}{N(\pi_{1, i})}}.
    \end{align}
    Since $L(\overline{\psi_{-1}}, 1)=\Omega/(2\sqrt{2})$ (\textit{cf}. \cite[p.87]{BirchSwinnerton-Dyer:1965}), we obtain
    \begin{align}
        v_2\skakko*{\frac{L_{2\Delta_1}^\ast(\overline{\psi_{-1}}, 1)}{\Omega}}
        =\sum_{i=1}^nv_2\skakko*{\pi_{1, i}-\skakko*{\dfrac{-1}{\pi_{1, i}}}_4}-\frac{3}{2}\geq n-\frac{3}{2}.\label{eq:Lvaluation1}
    \end{align}
    We prove the theorem by induction on $n$. For $n=1$, by Proposition \ref{prop:finitesumvaluation},
    we see that the $2$-adic valuation of
    \begin{align}
        \frac{L_{2\Delta_1}^{\ast}(\overline{\psi_{-1}}, 1)}{\Omega}+\frac{L_{2}^{\ast}(\overline{\psi_{-D_1^{(n)}}}, 1)}{\Omega}\label{eq:Lvaluation2}
    \end{align}
    is greater than $-1/2$. Since the $2$-adic valuation of the first term in \eqref{eq:Lvaluation2}
    is greater than or equal to $-1/2$ by \eqref{eq:Lvaluation1},
    the valuation of the second term must also be greater than or equal to $-1/2$.
    Thus, it holds for $n=1$. Suppose it is true for $1, \dots, n-1$.
    Then by Proposition \ref{prop:finitesumvaluation}, the $2$-adic valuation of
    \begin{align}
        &\frac{L_{2\Delta_1}^{\ast}(\overline{\psi_{-1}}, 1)}{\Omega}
        +\sum_{\varnothing\neq T_1\subsetneq \{1, \dots, n\}}\frac{L_{2\Delta_1}^{\ast}(\overline{\psi_{-D_{T_1}}}, 1)}{\Omega}
        +\frac{L_{2}^{\ast}(\overline{\psi_{-D_1^{(n)}}}, 1)}{\Omega}\label{eq:Lvaluation3}
    \end{align}
    is greater than $(n-2)/2$. The valuation of the first term in \eqref{eq:Lvaluation3}
    is greater than or equal to $(n-2)/2$ by $\eqref{eq:Lvaluation1}$.
    By using the induction hypothesis, it holds that
    \begin{align}
        &v_2\skakko*{\sum_{\varnothing \neq T_1\subsetneq \{1, \dots, n\}}\dfrac{L_{2\Delta_1}^{\ast}(\overline{\psi_{-D_{T_1}}}, 1)}{\Omega}}\\
        &=v_2\skakko*{\sum_{\varnothing \neq T_1\subsetneq \{1, \dots, n\}}\dfrac{L_{2}^{\ast}(\overline{\psi_{-D_{T_1}}}, 1)}{\Omega}\prod_{\pi_{1, i}\nmid D_{T_1}}\skakko*{1-\dfrac{\overline{\psi_{-D_{T_1}}}((\pi_{1, i}))}{N(\pi_{1, i})}}}\\
        &\geq \min_{\varnothing \neq T_1\subsetneq \{1, \dots, n\}} \mkakko*{\dfrac{\#T_1-2}{2}+\sum_{\pi_{1, i}\nmid D_{T_1}}v_2\skakko*{\pi_{1, i}-\overline{\psi_{-D_{T_1}}}((\pi_{1, i}))}}\\
        &\geq \min_{\varnothing \neq T_1\subsetneq \{1, \dots, n\}}\mkakko*{\dfrac{\# T_1-2}{2}+\dfrac{n-\# T_1}{2}}\\
        &=\dfrac{n-2}{2}.
    \end{align}
    Thus, it also holds for $n$ and we obtain the theorem. 
\end{proof}

\begin{thm}\label{thm:2Lvaluation}
    Under the same conditions as Theorem \ref{thm:Lvaluation}, we have
    \begin{align}
        v_2\skakko*{\dfrac{L_{2}(\overline{\psi_{-D}}, 1)}{\Omega}}\geq
        \begin{cases}
            \dfrac{n+m-2}{2} & (D=D_{1}^{(n)}D_{2}^{(m)}),\vspace{2pt}\\
            \dfrac{m+\ell-2}{2} & (D=D_{2}^{(m)}D_{3}^{(\ell)}),\vspace{2pt}\\
            \dfrac{n+\ell-2}{2} & (D=D_{1}^{(n)}D_{3}^{(\ell)}),\vspace{2pt}\\
            \dfrac{n+m+\ell-2}{2} & (D=D_{1}^{(n)}D_{2}^{(m)}D_{3}^{(\ell)}).
        \end{cases}
    \end{align}
\end{thm}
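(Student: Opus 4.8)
The plan is to extend the inductive argument of Theorem \ref{thm:Lvaluation} by iterating Zhao's method across the three "blocks" $D_1, D_2, D_3$ in turn, rather than across a single family of primes of equal index. I would prove the four cases in increasing order of complexity, starting from $D=D_1^{(n)}D_2^{(m)}$ and finishing with the full decomposition $D=D_1^{(n)}D_2^{(m)}D_3^{(\ell)}$; the other two mixed cases are handled by the same bookkeeping. Throughout, I would use Proposition \ref{prop:Tate_alg} to pass between $L_2$ and $L_{2\Delta}$, and the relation $L_{2\Delta}^{\ast}(\overline{\psi_{-D_T}},1) = L_{2}^{\ast}(\overline{\psi_{-D_T}},1)\prod_{\pi\mid \Delta,\ \pi\nmid D_T}\bigl(1-\overline{\psi_{-D_T}}(\pi)/N\pi\bigr)$ to convert the omitted-Euler-factor sums coming from Proposition \ref{prop:finitesumvaluation} back into the quantities $L_2^{\ast}(\overline{\psi_{-D_T}},1)/\Omega$ that the induction hypothesis controls. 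The base contributions for the empty or small index sets come from the already-established Theorem \ref{thm:Lvaluation}.

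Concretely, for $D=D_1^{(n)}D_2^{(m)}$ I would apply Proposition \ref{prop:finitesumvaluation} in the regime $T_1\subset\{1,\dots,n\},\ T_2\subset\{1,\dots,m\},\ T_3=\varnothing$, so $\Delta=\Delta_1\Delta_2$ and
\begin{align}
v_2\Biggl(\sum_{T_1,T_2}\dfrac{L_{2\Delta}^{\ast}(\overline{\psi_{-D_T}},1)}{\Omega}\Biggr)\geq \dfrac{n+2m-1}{2}.
\end{align}
The term with $(T_1,T_2)=(\{1,\dots,n\},\{1,\dots,m\})$ is exactly $L_2^{\ast}(\overline{\psi_{-D}},1)/\Omega$, which is the quantity we want to bound. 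Every other term has $D_T$ a proper divisor with strictly fewer primes in at least one block; rewriting its omitted Euler factors via the identity above, it equals $L_2^{\ast}(\overline{\psi_{-D_T}},1)/\Omega$ times a product of factors each of $2$-adic valuation $v_2(\pi-\overline{\psi_{-D_T}}(\pi))\geq 1/2$, one for each prime of $D/D_T$ that divides $\Delta$ but not $D_T$. Applying either Theorem \ref{thm:Lvaluation} (if $D_T$ lies in a single block) or the induction hypothesis of the present theorem (if $D_T$ is a genuine mixed divisor with fewer primes) gives, for such a term, a valuation at least $(\#T_1+\#T_2-2)/2 + (n-\#T_1)/2\cdot[\text{index-1 part}] + (m-\#T_2)$-type contributions — more precisely at least $(n+m-2)/2$ after accounting for the index-$2$ primes contributing $1$ each rather than $1/2$. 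Since all the "error" terms have valuation $\geq (n+m-2)/2$ and the total sum has valuation $\geq (n+2m-1)/2 > (n+m-2)/2$, the distinguished term $L_2^{\ast}(\overline{\psi_{-D}},1)/\Omega$ must itself have valuation $\geq (n+m-2)/2$, which is the claim. The general case $D=D_1^{(n)}D_2^{(m)}D_3^{(\ell)}$ is identical with $\Delta=\Delta_1\Delta_2\Delta_3$ and the target bound $(n+m+\ell-2)/2$ read off from the last line of Proposition \ref{prop:finitesumvaluation}, now inducting on $n+m+\ell$.

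The main obstacle is purely organizational: making the induction hypothesis strong enough to cover all proper mixed divisors. A clean way is to prove a single statement by induction on the triple $(n,m,\ell)$ ordered componentwise (or on $n+m+\ell$), asserting the bound $(n+m+\ell-2)/2$ for $D=D_1^{(n)}D_2^{(m)}D_3^{(\ell)}$ with the convention that a missing block contributes its Theorem \ref{thm:Lvaluation} value, and verifying that every proper divisor $D_T$ appearing in the sum from Proposition \ref{prop:finitesumvaluation} — together with its attached Euler correction factors of valuation $\geq 1/2$ per missing index-$1$ or index-$3$ prime and $\geq 1$ per missing index-$2$ prime — contributes valuation at least $(n+m+\ell-2)/2$. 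This amounts to checking the arithmetic inequality
\begin{align}
\dfrac{\#T_1+2\#T_2+\#T_3-2}{2} + \dfrac{n-\#T_1}{2} + (m-\#T_2) + \dfrac{\ell-\#T_3}{2}\ \geq\ \dfrac{n+m+\ell-2}{2},
\end{align}
which holds with equality, for every choice of subsets not all full. The base case, where $T_1,T_2,T_3$ are as small as possible, reduces to the computation of $L(\overline{\psi_{-1}},1)=\Omega/(2\sqrt 2)$ exactly as in the proof of Theorem \ref{thm:Lvaluation}, and I would also need to confirm via TABLE \ref{tb:LocalInformation2} that the conductor bookkeeping (hence the choice of $\Delta$ and the fact that the conductor of $\psi_{-D_T}$ divides $4\Delta\calO_K$) is compatible with the index structure, since an index-$2$ prime carries $v=6$ and thus contributes an extra factor of $2$ to the relevant Euler terms — this is precisely what produces the asymmetric $2m$ in Proposition \ref{prop:finitesumvaluation} and must be tracked carefully to land the bound with the correct constant.
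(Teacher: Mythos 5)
Your proposal follows essentially the same route as the paper: sum the identity of Proposition \ref{prop:finitesumvaluation} over all divisors $D_T$ with $\Delta$ the full radical, isolate the term where all $T_i$ are full, bound the $D_T=1$ term via $L(\overline{\psi_{-1}},1)=\Omega/(2\sqrt2)$, the single-block terms via Theorem \ref{thm:Lvaluation}, and the proper mixed terms via the inductive hypothesis, each multiplied by Euler factors for the primes of $\Delta$ not dividing $D_T$. The paper writes out only the case $D=D_1^{(n)}D_2^{(m)}$ and organizes the induction as three steps over the lattice of pairs $(n,m)$; your single induction on $n+m+\ell$ (or on the componentwise order) is equivalent. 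The one thing you must fix is the arithmetic in your displayed inequality. For a genuinely mixed proper divisor $D_T$ the induction hypothesis gives $(\#T_1+\#T_2+\#T_3-2)/2$, not $(\#T_1+2\#T_2+\#T_3-2)/2$; and a missing index-$2$ prime contributes only $1/2$, not $1$: the Euler factor $1-\overline{\psi_{-D_T}}((\pi_{2,j}))/N\pi_{2,j}$ has valuation $v_2\bigl(\pi_{2,j}-(-D_T/\pi_{2,j})_4\bigr)$, and once $D_T$ contains index-$1$ or index-$3$ primes the quartic symbol can be $\pm i$, so only $v_2(1\mp i)=1/2$ is guaranteed. The bound $\geq 1$ per Euler factor is available exactly when $-D_T$ is $-1$ times a square, i.e.\ when $T_1=T_3=\varnothing$; it is genuinely needed for the $D_T=1$ term (where $-3/2$ plus $1/2$ per prime would miss the target) and that is where the paper uses it. With the corrected constants the identity to check is
\begin{align}
\dfrac{\#T_1+\#T_2+\#T_3-2}{2}+\dfrac{n-\#T_1}{2}+\dfrac{m-\#T_2}{2}+\dfrac{\ell-\#T_3}{2}=\dfrac{n+m+\ell-2}{2},
\end{align}
which is exactly the paper's computation for the fully mixed terms in Step 3. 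Your version overestimates both summands involving $\#T_2$, so although the inequality you wrote is numerically true (its left side equals $(n+2m+\ell-2)/2$, so it does not ``hold with equality''), it is not a valid lower bound for the term in question. Since the corrected estimate still meets the target with equality, the argument closes as you intend; the slip is repairable and does not affect the structure of the proof.
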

\begin{proof}
    We only prove the case of $D=D_1^{(n)}D_2^{(m)}$ by double induction on $n$ and $m$ based on the following steps (see FIGURE \ref{fig:Step1andStep2} and FIGURE \ref{fig:Step3}).
    \begin{description}
        \item[Step 1] It holds for $(1, m)$ for all $m$.
        \item[Step 2] It holds for $(n, 1)$ for all $n$.
        \item[Step 3] If it holds for $(n_0, m_0)\neq (n, m) \ (1\leq n_0\leq n, 1\leq m_0\leq m)$, then $(n, m)$ holds.
    \end{description}
    \begin{figure}[htbp]
        \begin{minipage}{0.45\hsize}
            \centering
            \begin{tikzpicture}[scale=0.75]

                \draw[->] (0, 0) -- (6.0, 0); 
                \draw[->] (0, 0) -- (0, 6.0); 

                \draw[very thin, gray] (1, 0) -- (1, 5.9); \node at (1, -0.8) {$1$};
                \draw[very thin, gray] (2, 0) -- (2, 5.9); \node at (2, -0.8) {$2$};
                \node at (3, -0.8) {$\cdots$};
                \draw[very thin, gray] (4, 0) -- (4, 5.9); \node at (4, -0.8) {$n-1$};
                \draw[very thin, gray] (5, 0) -- (5, 5.9); \node at (5, -0.8) {$n$};

                \draw[very thin, gray] (0, 1) -- (5.9, 1); \node at (-0.8, 1) {$1$};
                \draw[very thin, gray] (0, 2) -- (5.9, 2); \node at (-0.8, 2) {$2$};
                \node at (-0.8, 3) {$\vdots$};
                \draw[very thin, gray] (0, 4) -- (5.9, 4); \node at (-0.8, 4) {$m-1$};
                \draw[very thin, gray] (0, 5) -- (5.9, 5); \node at (-0.8, 5) {$m$};

                \fill (1, 1) circle (2pt); \fill (2, 1) circle (2pt); \fill (4, 1) circle (2pt); \fill (5, 1) circle (2pt);
                \fill (1, 2) circle (2pt); \fill (1, 4) circle (2pt); \fill (1, 5) circle (2pt);
                \draw (2, 2) circle [radius=2pt]; \draw (4, 2) circle [radius=2pt]; \draw (5, 2) circle [radius=2pt];
                \draw (2, 4) circle [radius=2pt]; \draw (2, 5) circle [radius=2pt];
                \draw (4, 4) circle [radius=2pt]; \draw (4, 5) circle [radius=2pt];
                \draw (5, 4) circle [radius=2pt]; \draw (5, 5) circle [radius=2pt];

                \draw[rounded corners=5pt] (0.8, 0.8) -- (5.2, 0.8) -- (5.2, 1.2) -- (1.2, 1.2) -- (1.2, 5.2) -- (0.8, 5.2) -- (0.8, 0.8) -- cycle;

            \end{tikzpicture}
            \caption{Step 1 and Step 2}
            \label{fig:Step1andStep2}
        \end{minipage}
        \hspace{20pt}
        \begin{minipage}{0.45\hsize}
            \begin{tikzpicture}[scale=0.75]

                \draw[->] (0, 0) -- (6.0, 0); 
                \draw[->] (0, 0) -- (0, 6.0); 

                \draw[very thin, gray] (1, 0) -- (1, 5.9); \node at (1, -0.8) {$1$};
                \draw[very thin, gray] (2, 0) -- (2, 5.9); \node at (2, -0.8) {$2$};
                \node at (3, -0.8) {$\cdots$};
                \draw[very thin, gray] (4, 0) -- (4, 5.9); \node at (4, -0.8) {$n-1$};
                \draw[very thin, gray] (5, 0) -- (5, 5.9); \node at (5, -0.8) {$n$};

                \draw[very thin, gray] (0, 1) -- (5.9, 1); \node at (-0.8, 1) {$1$};
                \draw[very thin, gray] (0, 2) -- (5.9, 2); \node at (-0.8, 2) {$2$};
                \node at (-0.8, 3) {$\vdots$};
                \draw[very thin, gray] (0, 4) -- (5.9, 4); \node at (-0.8, 4) {$m-1$};
                \draw[very thin, gray] (0, 5) -- (5.9, 5); \node at (-0.8, 5) {$m$};

                \fill (1, 1) circle (2pt); \fill (2, 1) circle (2pt); \fill (4, 1) circle (2pt); \fill (5, 1) circle (2pt);
                \fill (1, 2) circle (2pt); \fill (1, 4) circle (2pt); \fill (1, 5) circle (2pt); \fill (2, 2) circle (2pt);
                \fill (2, 4) circle (2pt); \fill (2, 5) circle (2pt); \fill (4, 2) circle (2pt); \fill (4, 4) circle (2pt);
                \fill (4, 5) circle (2pt); \fill (5, 2) circle (2pt); \fill (5, 4) circle (2pt);
                \draw (5, 5) circle [radius=2pt];

                \draw[rounded corners=5pt] (0.7, 0.7) -- (5.3, 0.7) -- (5.3, 4.3) -- (4.3, 4.3) -- (4.3, 5.3) -- (0.7, 5.3) -- (0.7, 0.7) -- cycle;

                \draw [red, arrows = {-Stealth}] (4.3, 5) -- (4.9, 5);
                \draw [red, arrows = {-Stealth}] (5, 4.3) -- (5, 4.9);

            \end{tikzpicture}
            \caption{Step 3}
            \label{fig:Step3}
        \end{minipage}
    \end{figure}
    First, we show Step 1 by induction on $m$. For $m=1$, the $2$-adic valuation of
    \begin{align}
        \underbrace{\dfrac{L_{2\Delta_1\Delta_2}^\ast(\overline{\psi_{-1}}, 1)}{\Omega}}_{T_1=T_2=\varnothing}
        +\underbrace{\dfrac{L_{2\Delta_1\Delta_2}^\ast(\overline{\psi_{-D_2^{(1)}}}, 1)}{\Omega}}_{T_1=\varnothing, T_2=\{1\}}
        +\underbrace{\dfrac{L_{2\Delta_1\Delta_2}^\ast(\overline{\psi_{-D_1^{(1)}}}, 1)}{\Omega}}_{T_1=\{1\}, T_2=\varnothing}
        +\underbrace{\dfrac{L_{2}^\ast(\overline{\psi_{-D_1^{(1)}D_2^{(1)}}}, 1)}{\Omega}}_{T_1=\{1\}, T_2=\{1\}}\label{eq:2Lvaluation1}
    \end{align}
    is greater than $0$ from Proposition \ref{prop:finitesumvaluation}.
    Therefore, we need to show the first three terms of \eqref{eq:2Lvaluation1}
    is greater than or equal to $0$. For the first term, we see that
    \begin{align}
        v_2\skakko*{\dfrac{L_{2\Delta_1\Delta_2}^\ast(\overline{\psi_{-1}}, 1)}{\Omega}}
        &=v_2\skakko*{\dfrac{L_{2}^\ast(\overline{\psi_{-1}}, 1)}{\Omega}\skakko*{1-\dfrac{\overline{\psi_{-1}}((\pi_{1, 1}))}{N(\pi_{1, 1})}}\skakko*{1-\dfrac{\overline{\psi_{-1}}((\pi_{2, 1}))}{N(\pi_{2, 1})}}}\\
        &\geq -\dfrac{3}{2}+1+1\\
        &>0.
    \end{align}
    For the second term, by Theorem \ref{thm:Lvaluation}, we have
    \begin{align}
        v_2\skakko*{\dfrac{L_{2\Delta_1\Delta_2}^\ast(\overline{\psi_{-D_2^{(1)}}}, 1)}{\Omega}}
        =v_2\skakko*{\dfrac{L_{2}^\ast(\overline{\psi_{-D_2^{(1)}}}, 1)}{\Omega}\skakko*{1-\dfrac{\overline{\psi_{-D_2^{(1)}}}((\pi_{1, 1}))}{N(\pi_{1, 1})}}}
        \geq -\dfrac{1}{2}+1>0.
    \end{align}
    For the third term, we can show that the $2$-adic valuation is greater than $0$ similarly as the second term. Thus it holds for $m=1$. Suppose it is true for $1, \dots, m-1$. Then the 2-adic valuation of
    \begin{equation}
        \begin{split}
            &\underbrace{\dfrac{L_{2\Delta_1\Delta_2}^\ast(\overline{\psi_{-1}}, 1)}{\Omega}}_{T_1=T_2=\varnothing}
            +\sum_{\varnothing \neq T_2\subset\{1, \dots, m\}}\dfrac{L_{2\Delta_1\Delta_2}^\ast(\overline{\psi_{-D_{T_2}}}, 1)}{\Omega}
            +\underbrace{\dfrac{L_{2\Delta_1\Delta_2}^\ast(\overline{\psi_{-D_1^{(1)}}}, 1)}{\Omega}}_{T_1=\{1\}, T_2=\varnothing}\\
            &+\sum_{\varnothing\neq T_2\subsetneq \{1, \dots, m\}}\dfrac{L_{2\Delta_1\Delta_2}^\ast(\overline{\psi_{-D_1^{(1)}D_{T_2}}}, 1)}{\Omega}
            +\underbrace{\dfrac{L_{2}^\ast(\overline{\psi_{-D_1^{(1)}D_2^{(m)}}}, 1)}{\Omega}}_{T_1=\{1\}, T_2=\{1, \dots, m\}}\label{eq:2Lvaluation2}
        \end{split}
    \end{equation}
    is greater than $(m-1)/2$ from Proposition \ref{prop:finitesumvaluation}.
    Therefore, we need to show the first four terms of \eqref{eq:2Lvaluation2}
    is greater than or equal to $(m-1)/2$. For the first term, we see that
    \begin{align}
        v_2\skakko*{\dfrac{L_{2\Delta_1\Delta_2}^\ast(\overline{\psi_{-1}}, 1)}{\Omega}}
        &=v_2\skakko*{\dfrac{L_{2}^\ast(\overline{\psi_{-1}}, 1)}{\Omega}\skakko*{1-\dfrac{\overline{\psi_{-1}}((\pi_{1, 1}))}{N(\pi_{1, 1})}}\prod_{j=1}^{m}\skakko*{1-\dfrac{\overline{\psi_{-1}}((\pi_{2, j}))}{N(\pi_{2, j})}}}\\
        &\geq-\dfrac{3}{2}+1+m\\
        &>\dfrac{m-1}{2}.
    \end{align}
    For the second term, by Theorem \ref{thm:Lvaluation}, we have
    \begin{align}
        &v_2\skakko*{\sum_{\varnothing \neq T_2\subset\{1, \dots, m\}}\dfrac{L_{2\Delta_1\Delta_2}^\ast(\overline{\psi_{-D_{T_2}}}, 1)}{\Omega}}\\
        &\geq \min_{\varnothing\neq T_2\subset\{1, \dots, m\}}\left\{v_2\skakko*{\dfrac{L_{2}^\ast(\overline{\psi_{-D_{T_2}}}, 1)}{\Omega}\skakko*{1-\dfrac{\overline{\psi_{-D_{T_2}}}((\pi_{1, 1}))}{N(\pi_{1, 1})}}\prod_{\pi_{2, j}\nmid D_{T_2}}\skakko*{1-\dfrac{\overline{\psi_{-D_{T_2}}}((\pi_{2, j}))}{N(\pi_{2, j})}}} \right\}\\
        &\geq \min_{\varnothing\neq T_2\subset\{1, \dots, m\}}\left\{\dfrac{2\# T_2-3}{2}+1+(m-\# T_2)\right\}\\
        &>\dfrac{m-1}{2}.
    \end{align}
    For the third term, by Theorem \ref{thm:Lvaluation}, it follows
    \begin{align}
        v_2\skakko*{\dfrac{L_{2\Delta_1\Delta_2}^\ast(\overline{\psi_{-D_1^{(1)}}}, 1)}{\Omega}}
        &=v_2\skakko*{\dfrac{L_{2}^\ast(\overline{\psi_{-D_{1}^{(1)}}}, 1)}{\Omega}\prod_{j=1}^{m}\skakko*{1-\dfrac{\overline{\psi_{-D_{1}^{(1)}}}((\pi_{2, j}))}{N(\pi_{2, j})}}}\\
        &\geq -\dfrac{1}{2}+\dfrac{1}{2}\cdot m\\
        &=\dfrac{m-1}{2}.
    \end{align}
    For the fourth term, by the induction hypothesis, it holds
    \begin{align}
        &v_2\skakko*{\sum_{\varnothing\neq T_2\subsetneq \{1, \dots, m\}}\dfrac{L_{2\Delta_1\Delta_2}^\ast(\overline{\psi_{-D_1^{(1)}D_{T_2}}}, 1)}{\Omega}}\\
        &\geq \min_{\varnothing\neq T_2\subsetneq \{1, \dots, m\}}\left\{v_2\skakko*{\dfrac{L_{2}^\ast(\overline{\psi_{-D_1^{(1)}D_{T_2}}}, 1)}{\Omega}\prod_{\pi_{2, j}\nmid D_{T_2}}\skakko*{1-\dfrac{\overline{\psi_{-D_1^{(1)}D_{T_2}}}((\pi_{2, j}))}{N(\pi_{2, j})}}} \right\}\\
        &\geq \dfrac{1+\# T_2-2}{2}+\dfrac{1}{2}\cdot (m-\# T_2)\\
        &=\dfrac{m-1}{2}.
    \end{align}
    Thus it holds for $m$ and Step 1 is done.

    By a similar calculation, Step 2 can be shown by induction on $n$. We show Step 3. Suppose it is true for $(n_0, m_0) \ (1\leq n_0\leq n, 1\leq m_0\leq m, (n_0, m_0)\neq (n, m))$. Then the $2$-adic valuation of
    \begin{equation}
        \begin{split}
            &\underbrace{\dfrac{L_{2\Delta_1\Delta_2}^{\ast}(\overline{\psi_{-1}}, 1)}{\Omega}}_{T_1=T_2=\varnothing}
            +\sum_{\varnothing\neq T_2\subset\{1, \dots, m\}}\dfrac{L_{2\Delta_1\Delta_2}^{\ast}(\overline{\psi_{-D_{T_2}}}, 1)}{\Omega}
            +\sum_{\varnothing \neq T_1 \subset\{1, \dots, n\}}\dfrac{L_{2\Delta_1\Delta_2}^{\ast}(\overline{\psi_{-D_{T_1}}}, 1)}{\Omega}\\
            &+\sum_{\substack{\varnothing\neq T_1\subsetneq\{1, \dots, n\} \\ \varnothing\neq T_2\subsetneq \{1, \dots, m\}}}\dfrac{L_{2\Delta_1\Delta_2}^{\ast}(\overline{\psi_{-D_{T_1}D_{T_2}}}, 1)}{\Omega}
            +\sum_{\varnothing\neq T_1\subsetneq \{1, \dots, n\}}\dfrac{L_{2\Delta_1\Delta_2}^\ast(\overline{\psi_{-D_{T_1}D_2^{(m)}}}, 1)}{\Omega}\\
            &+\sum_{\varnothing\neq T_2\subsetneq\{1, \dots, m\}}\dfrac{L_{2\Delta_1\Delta_2}^{\ast}(\overline{\psi_{-D_1^{(n)}D_{T_2}}}, 1)}{\Omega}
            +\underbrace{\dfrac{L_{2}^{\ast}(\overline{\psi_{-D_1^{(n)}D_2^{(m)}}}, 1)}{\Omega}}_{T_1=\{1, \dots, n\}, T_2=\{1, \dots, m\}}\label{eq:2Lvaluation3}
        \end{split}
    \end{equation}
    is greater than $(n+m-2)/2$ from Proposition \ref{prop:finitesumvaluation} (see FIGURE \ref{fig:Equation13}).
    \begin{figure}[htbp]
        \centering
        \begin{tikzpicture}[scale=0.8]

            \draw[->] (0, 0) -- (6.0, 0); 
            \draw[->] (0, 0) -- (0, 6.0); 

            \draw[very thin, gray] (1, 0) -- (1, 5.9); \node at (1, -0.8) {$1$};
            \draw[very thin, gray] (2, 0) -- (2, 5.9); \node at (2, -0.8) {$2$};
            \node at (3, -0.8) {$\cdots$};
            \draw[very thin, gray] (4, 0) -- (4, 5.9); \node at (4, -0.8) {$n-1$};
            \draw[very thin, gray] (5, 0) -- (5, 5.9); \node at (5, -0.8) {$n$};

            \draw[very thin, gray] (0, 1) -- (5.9, 1); \node at (-1.0, 1) {$1$};
            \draw[very thin, gray] (0, 2) -- (5.9, 2); \node at (-1.0, 2) {$2$};
            \node at (-1.0, 3) {$\vdots$};
            \draw[very thin, gray] (0, 4) -- (5.9, 4); \node at (-1.0, 4) {$m-1$};
            \draw[very thin, gray] (0, 5) -- (5.9, 5); \node at (-1.0, 5) {$m$};

            \fill (0, 0) circle (2pt);
            \fill (1, 0) circle (2pt); \fill (2, 0) circle (2pt); \fill (4, 0) circle (2pt); \fill (5, 0) circle (2pt);
            \fill (0, 1) circle (2pt); \fill (0, 2) circle (2pt); \fill (0, 4) circle (2pt); \fill (0, 5) circle (2pt);
            \fill (1, 1) circle (2pt); \fill (2, 1) circle (2pt); \fill (4, 1) circle (2pt); \fill (5, 1) circle (2pt);
            \fill (1, 2) circle (2pt); \fill (1, 4) circle (2pt); \fill (1, 5) circle (2pt); \fill (2, 2) circle (2pt);
            \fill (2, 4) circle (2pt); \fill (2, 5) circle (2pt); \fill (4, 2) circle (2pt); \fill (4, 4) circle (2pt);
            \fill (4, 5) circle (2pt); \fill (5, 2) circle (2pt); \fill (5, 4) circle (2pt); \fill (5, 5) circle (2pt);

            \draw[rounded corners=5pt] (-0.3, -0.3) -- (0.3, -0.3) -- (0.3, 0.3) -- (-0.3, 0.3) -- (-0.3, -0.3) -- cycle;
            \draw[rounded corners=5pt] (-0.3, 0.7) -- (0.3, 0.7) -- (0.3, 5.3) -- (-0.3, 5.3) -- (-0.3, 0.7) -- cycle;
            \draw[rounded corners=5pt] (0.7, -0.3) -- (5.3, -0.3) -- (5.3, 0.3) -- (0.7, 0.3) -- (0.7, -0.3) -- cycle;
            \draw[rounded corners=5pt] (0.7, 0.7) -- (4.3, 0.7) -- (4.3, 4.3) -- (0.7, 4.3) -- (0.7, 0.7) -- cycle;
            \draw[rounded corners=5pt] (0.7, 4.7) -- (4.3, 4.7) -- (4.3, 5.3) -- (0.7, 5.3) -- (0.7, 4.7) -- cycle;
            \draw[rounded corners=5pt] (4.7, 0.7) -- (5.3, 0.7) -- (5.3, 4.3) -- (4.7, 4.3) -- (4.7, 0.7) -- cycle;
            \draw[rounded corners=5pt] (4.7, 4.7) -- (5.3, 4.7) -- (5.3, 5.3) -- (4.7, 5.3) -- (4.7, 4.7) -- cycle;

        \end{tikzpicture}
        \caption{Equation \eqref{eq:2Lvaluation3}}
        \label{fig:Equation13}
    \end{figure}

    \noindent Therefore, we need to show the first sixth terms of \eqref{eq:2Lvaluation3} is greater than or equal to $(n+m-2)/2$.
    We calculate the $2$-adic valuation for the first term, second term and fourth term. For the others term, one could calculate similarly. For the first term, we see that
    \begin{align}
        v_2\skakko*{\dfrac{L_{2\Delta_1\Delta_2}^{\ast}(\overline{\psi_{-1}}, 1)}{\Omega}}
        &=v_2\skakko*{\dfrac{L_{2}^{\ast}(\overline{\psi_{-1}}, 1)}{\Omega}\prod_{i=1}^{n}\skakko*{1-\dfrac{\overline{\psi_{-1}}((\pi_{1, i}))}{N(\pi_{1, i})}}\prod_{j=1}^{m}\skakko*{1-\dfrac{\overline{\psi_{-1}}((\pi_{2, j}))}{N(\pi_{2, j})}}}\\
        &\geq-\dfrac{3}{2}+n+m\\
        &>\dfrac{n+m-2}{2}.
    \end{align}
    For the second term, by Theorem \ref{thm:Lvaluation}, it follows
    \begin{align}
        &v_2\skakko*{\sum_{\varnothing\neq T_2\subset\{1, \dots, m\}}\dfrac{L_{2\Delta_1\Delta_2}^{\ast}(\overline{\psi_{-D_{T_2}}}, 1)}{\Omega}}\\
        &\geq \min_{\varnothing\neq T_2\subset\{1, \dots, m\}}\left\{v_2\skakko*{\dfrac{L_{2}^{\ast}(\overline{\psi_{-D_{T_2}}}, 1)}{\Omega}\prod_{i=1}^{n}\skakko*{1-\dfrac{\overline{\psi_{-D_{T_2}}}((\pi_{1, i}))}{N(\pi_{1, i})}}\prod_{\pi_{2, j}\nmid D_{T_2}}\skakko*{1-\dfrac{\overline{\psi_{-D_{T_2}}}((\pi_{2, j}))}{N(\pi_{2, j})}}} \right\}\\
        &\geq \min_{\varnothing\neq T_2\subset\{1, \dots, m\}}\left\{\dfrac{2\# T_2-3}{2}+1\cdot n+1\cdot(m-\# T_2)\right\}\\
        &>\dfrac{n+m-2}{2}.
    \end{align}
    For the fourth term, by the induction hypothesis, it holds
    \footnotesize
    \begin{align}
        &v_2\skakko*{\sum_{\substack{\varnothing\neq T_1\subsetneq\{1, \dots, n\} \\ \varnothing\neq T_2\subsetneq \{1, \dots, m\}}}\dfrac{L_{2\Delta_1\Delta_2}^{\ast}(\overline{\psi_{-D_{T_1}D_{T_2}}}, 1)}{\Omega}}\\
        &\geq \min_{\substack{\varnothing\neq T_1\subsetneq\{1, \dots, n\} \\ \varnothing\neq T_2\subsetneq \{1, \dots, m\}}}\left\{v_2\skakko*{\dfrac{L_{2}^{\ast}(\overline{\psi_{-D_{T_1}D_{T_2}}}, 1)}{\Omega}\prod_{\pi_{1, i}\nmid D_{T_1}}\skakko*{1-\dfrac{\overline{\psi_{-D_{T_1}D_{T_2}}}((\pi_{1, i}))}{N(\pi_{1, i})}}\prod_{\pi_{2, j}\nmid D_{T_2}}\skakko*{1-\dfrac{\overline{\psi_{-D_{T_1}D_{T_2}}}((\pi_{2, j}))}{N(\pi_{2, j})}}} \right\}\\
        &\geq \min_{\substack{\varnothing\neq T_1\subsetneq\{1, \dots, n\} \\ \varnothing\neq T_2\subsetneq \{1, \dots, m\}}}\left\{\dfrac{\# T_1+\# T_2-2}{2}+\dfrac{1}{2}\cdot (n-\# T_1)+\dfrac{1}{2}\cdot (m-\# T_2) \right\}\\
        &=\dfrac{n+m-2}{2}.
    \end{align}
    \normalsize
    Thus it is true for $(n_0, m_0)=(n, m)$ and Step 3 is done. This completes the proof.
\end{proof}

\newpage

\section{Numerical Examples}\label{sec:Example}

We have listed the 2-adic valuation for the case $D=D_1^{(1)}$ and $D=D_1^{(1)}D_2^{(1)}$.
Here, we have arranged it in ascending order of the absolute value of $D$.

\begin{table}[htbp]
    \centering
    \begin{minipage}{0.48\linewidth}
        \centering
        \begin{tabular}{ccc}
            $v_2(L_{2}(\overline{\psi_{-D}}, 1)/\Omega)$ & $D$ & $(i/D)_4$ \\ \hline
            $-1/2$ & $2i - 1$ & $i$ \\
            $0$ & $-3$ & $-1$ \\
            $-1/2$ & $-2i + 3$ & $-i$ \\
            $\infty$ & $-4i + 1$ & $1$ \\
            $-1/2$ & $2i - 5$ & $-i$ \\
            $-1/2$ & $6i - 1$ & $i$ \\
            $0$ & $-4i + 5$ & $-1$ \\
            $\infty$ & $-7$ & $1$ \\
            $-1/2$ & $-2i + 7$ & $i$ \\
            $-1/2$ & $-6i - 5$ & $-i$ \\
            $0$ & $8i - 3$ & $-1$ \\
            $0$ & $8i + 5$ & $-1$ \\
            $1$ & $-4i + 9$ & $1$ \\
            $-1/2$ & $10i - 1$ & $i$ \\
            $-1/2$ & $10i + 3$ & $-i$ \\
            $\infty$ & $-8i - 7$ & $1$ \\
            $0$ & $-11$ & $-1$ \\
            $0$ & $-4i - 11$ & $-1$ \\
            $-1/2$ & $-10i + 7$ & $i$ \\
            $-1/2$ & $6i + 11$ & $-i$ \\
            $-1/2$ & $2i - 13$ & $-i$ \\
            $-1/2$ & $-10i - 9$ & $i$ \\
            $\infty$ & $-12i - 7$ & $1$ \\
            $-1/2$ & $14i - 1$ & $i$ \\
            $-1/2$ & $-2i + 15$ & $i$ \\
            $0$ & $8i + 13$ & $-1$ \\
            $1$ & $-4i - 15$ & $1$ \\
            $\infty$ & $-16i + 1$ & $1$ \\
            $-1/2$ & $-10i - 13$ & $-i$ \\
            $-1/2$ & $-14i - 9$ & $i$ \\
            $0$ & $16i + 5$ & $-1$ \\
            $-1/2$ & $2i - 17$ & $i$ \\
            $0$ & $-12i + 13$ & $-1$ \\
            $-1/2$ & $14i + 11$ & $-i$ \\
            $1$ & $16i + 9$ & $1$ \\
            $-1/2$ & $-18i - 5$ & $-i$ \\
            $\infty$ & $-8i + 17$ & $1$ \\
            $0$ & $-19$ & $-1$ \\
            $-1/2$ & $18i + 7$ & $i$ \\
            $-1/2$ & $10i - 17$ & $i$ \\
        \end{tabular}
    \end{minipage}
    \hspace{10pt}
    \begin{minipage}{0.48\linewidth}
        \centering
        \begin{tabular}{ccc}
            $v_2(L_{2}(\overline{\psi_{-D}}, 1)/\Omega)$ & $D$ & $(i/D)_4$ \\ \hline
            $-1/2$ & $-6i + 19$ & $-i$ \\
            $1$ & $-20i + 1$ & $1$ \\
            $0$ & $20i - 3$ & $-1$ \\
            $-1/2$ & $-14i + 15$ & $i$ \\
            $\infty$ & $-12i + 17$ & $1$ \\
            $\infty$ & $20i - 7$ & $1$ \\
            $0$ & $-4i + 21$ & $-1$ \\
            $-1/2$ & $10i + 19$ & $-i$ \\
            $-1/2$ & $22i - 5$ & $-i$ \\
            $0$ & $-20i - 11$ & $-1$ \\
            $\infty$ & $-23$ & $1$ \\
            $-1/2$ & $10i - 21$ & $-i$ \\
            $-1/2$ & $-14i + 19$ & $-i$ \\
            $0$ & $20i + 13$ & $-1$ \\
            $\infty$ & $-24i + 1$ & $1$ \\
            $\infty$ & $-8i - 23$ & $1$ \\
            $0$ & $-24i + 5$ & $-1$ \\
            $-1/2$ & $-18i - 17$ & $i$ \\
            $0$ & $-16i - 19$ & $-1$ \\
            $1$ & $-4i + 25$ & $1$ \\
            $-1/2$ & $-22i - 13$ & $-i$ \\
            $-1/2$ & $6i - 25$ & $i$ \\
            $\infty$ & $-12i - 23$ & $1$ \\
            $-1/2$ & $26i - 1$ & $i$ \\
            $-1/2$ & $-26i - 5$ & $-i$ \\
            $-1/2$ & $-22i + 15$ & $i$ \\
            $-1/2$ & $-2i + 27$ & $-i$ \\
            $-1/2$ & $26i - 9$ & $i$ \\
            $0$ & $-20i - 19$ & $-1$ \\
            $\infty$ & $-12i + 25$ & $1$ \\
            $-1/2$ & $-22i - 17$ & $i$ \\
            $-1/2$ & $26i + 11$ & $-i$ \\
            $0$ & $28i + 5$ & $-1$ \\
            $-1/2$ & $-14i - 25$ & $i$ \\
            $-1/2$ & $-10i + 27$ & $-i$ \\
            $-1/2$ & $18i + 23$ & $i$ \\
            $0$ & $-4i + 29$ & $-1$ \\
            $-1/2$ & $-6i - 29$ & $-i$ \\
            $1$ & $16i + 25$ & $1$ \\
            $2$ & $20i - 23$ & $1$ \\
        \end{tabular}
    \end{minipage}
    \caption{2-adic valuation for $D=D_1^{(1)}$}
    \label{tab:D1-1}
\end{table}

\begin{table}[htbp]
    \centering
    \begin{minipage}{0.48\linewidth}
        \centering
        \scalebox{0.8}{
        \begin{tabular}{cccc}
            $v_2(L_{2}(\overline{\psi_{-D}}, 1)/\Omega)$ & $D_1^{(1)}$ & $D_2^{(1)}$ & $(i/D)_4$ \\ \hline
            $\infty$ & $-3$ & $(2i - 1)^2$ & $1$ \\
            $0$ & $-2i + 3$ & $(2i - 1)^2$ & $i$ \\
            $0$ & $2i - 1$ & $(-3)^2$ & $i$ \\
            $1$ & $-4i + 1$ & $(2i - 1)^2$ & $-1$ \\
            $1/2$ & $2i - 5$ & $(2i - 1)^2$ & $i$ \\
            $0$ & $2i - 1$ & $(-2i + 3)^2$ & $-i$ \\
            $0$ & $6i - 1$ & $(2i - 1)^2$ & $-i$ \\
            $\infty$ & $-4i + 5$ & $(2i - 1)^2$ & $1$ \\
            $1/2$ & $-2i + 3$ & $(-3)^2$ & $-i$ \\
            $1/2$ & $-7$ & $(2i - 1)^2$ & $-1$ \\
            $\infty$ & $-4i + 1$ & $(-3)^2$ & $1$ \\
            $1/2$ & $2i - 1$ & $(-4i + 1)^2$ & $i$ \\
            $1$ & $-3$ & $(-2i + 3)^2$ & $1$ \\
            $0$ & $2i - 5$ & $(-3)^2$ & $-i$ \\
            $1/2$ & $-3$ & $(-4i + 1)^2$ & $-1$ \\
            $1/2$ & $-4i + 1$ & $(-2i + 3)^2$ & $-1$ \\
            $\infty$ & $6i - 1$ & $(-3)^2$ & $i$ \\
            $1$ & $-11$ & $(2i - 1)^2$ & $1$ \\
            $1/2$ & $-4i + 5$ & $(-3)^2$ & $-1$ \\
            $0$ & $-2i + 3$ & $(-4i + 1)^2$ & $-i$ \\
            $2$ & $-7$ & $(-3)^2$ & $1$ \\
            $\infty$ & $2i - 1$ & $(2i - 5)^2$ & $-i$ \\
            $0$ & $2i - 5$ & $(-2i + 3)^2$ & $i$ \\
            $0$ & $6i - 1$ & $(-2i + 3)^2$ & $-i$ \\
            $0$ & $2i - 1$ & $(6i - 1)^2$ & $-i$ \\
            $1$ & $-4i + 5$ & $(-2i + 3)^2$ & $1$ \\
            $3/2$ & $-3$ & $(2i - 5)^2$ & $1$ \\
            $1/2$ & $-7$ & $(-2i + 3)^2$ & $-1$ \\
            $\infty$ & $2i - 5$ & $(-4i + 1)^2$ & $-i$ \\
            $0$ & $2i - 1$ & $(-4i + 5)^2$ & $i$ \\
            $1$ & $-19$ & $(2i - 1)^2$ & $1$ \\
            $\infty$ & $-11$ & $(-3)^2$ & $-1$ \\
            $1/2$ & $6i - 1$ & $(-4i + 1)^2$ & $i$ \\
            $0$ & $-2i + 3$ & $(2i - 5)^2$ & $i$ \\
            $\infty$ & $-4i + 5$ & $(-4i + 1)^2$ & $-1$ \\
            $0$ & $2i - 1$ & $(-7)^2$ & $i$ \\
            $1$ & $-3$ & $(6i - 1)^2$ & $1$ \\
            $1/2$ & $-23$ & $(2i - 1)^2$ & $-1$ \\
            $3/2$ & $-7$ & $(-4i + 1)^2$ & $1$ \\
            $\infty$ & $-4i + 1$ & $(2i - 5)^2$ & $-1$ \\
        \end{tabular}}
    \end{minipage}
    \hspace{10pt}
    \begin{minipage}{0.48\linewidth}
        \centering
        \scalebox{0.8}{
        \begin{tabular}{cccc}
            $v_2(L_{2}(\overline{\psi_{-D}}, 1)/\Omega)$ & $D_1^{(1)}$ & $D_2^{(1)}$ & $(i/D)_4$ \\ \hline
            $1/2$ & $-3$ & $(-4i + 5)^2$ & $-1$ \\
            $0$ & $-2i + 3$ & $(6i - 1)^2$ & $i$ \\
            $\infty$ & $-11$ & $(-2i + 3)^2$ & $1$ \\
            $\infty$ & $-3$ & $(-7)^2$ & $-1$ \\
            $1/2$ & $-2i + 3$ & $(-4i + 5)^2$ & $-i$ \\
            $1$ & $-4i + 1$ & $(6i - 1)^2$ & $-1$ \\
            $1$ & $-31$ & $(2i - 1)^2$ & $-1$ \\
            $2$ & $-4i + 1$ & $(-4i + 5)^2$ & $1$ \\
            $\infty$ & $-19$ & $(-3)^2$ & $-1$ \\
            $0$ & $6i - 1$ & $(2i - 5)^2$ & $-i$ \\
            $0$ & $-2i + 3$ & $(-7)^2$ & $-i$ \\
            $1$ & $-4i + 5$ & $(2i - 5)^2$ & $1$ \\
            $1/2$ & $-11$ & $(-4i + 1)^2$ & $-1$ \\
            $0$ & $2i - 5$ & $(6i - 1)^2$ & $i$ \\
            $5/2$ & $-4i + 1$ & $(-7)^2$ & $1$ \\
            $1$ & $-7$ & $(2i - 5)^2$ & $-1$ \\
            $\infty$ & $-23$ & $(-3)^2$ & $1$ \\
            $\infty$ & $-43$ & $(2i - 1)^2$ & $1$ \\
            $\infty$ & $2i - 5$ & $(-4i + 5)^2$ & $-i$ \\
            $1/2$ & $-47$ & $(2i - 1)^2$ & $-1$ \\
            $3/2$ & $-4i + 5$ & $(6i - 1)^2$ & $1$ \\
            $3/2$ & $-19$ & $(-2i + 3)^2$ & $1$ \\
            $0$ & $6i - 1$ & $(-4i + 5)^2$ & $i$ \\
            $1$ & $-7$ & $(6i - 1)^2$ & $-1$ \\
            $1/2$ & $2i - 5$ & $(-7)^2$ & $-i$ \\
            $1/2$ & $2i - 1$ & $(-11)^2$ & $i$ \\
            $2$ & $-31$ & $(-3)^2$ & $1$ \\
            $3/2$ & $-7$ & $(-4i + 5)^2$ & $1$ \\
            $1/2$ & $6i - 1$ & $(-7)^2$ & $i$ \\
            $2$ & $-23$ & $(-2i + 3)^2$ & $-1$ \\
            $1/2$ & $-4i + 5$ & $(-7)^2$ & $-1$ \\
            $3/2$ & $-11$ & $(2i - 5)^2$ & $1$ \\
            $1$ & $-19$ & $(-4i + 1)^2$ & $-1$ \\
            $\infty$ & $-3$ & $(-11)^2$ & $-1$ \\
            $\infty$ & $-3$ & $(-11)^2$ & $-1$ \\
            $\infty$ & $-43$ & $(-3)^2$ & $-1$ \\
            $3/2$ & $-23$ & $(-4i + 1)^2$ & $1$ \\
            $1/2$ & $-31$ & $(-2i + 3)^2$ & $-1$ \\
            $1$ & $-11$ & $(6i - 1)^2$ & $1$ \\
            $3$ & $-47$ & $(-3)^2$ & $1$ \\
        \end{tabular}}
    \end{minipage}
    \caption{2-adic valuation for $D=D_1^{(1)}D_2^{(1)}$}
    \label{tab:D1-1D2-1}
\end{table}

\begin{ac}
    The author would like to thank Shinichi Kobayashi
    for his constructive suggestions and thoughtful guidance.
    He is also grateful to Taiga Adachi, Satoshi Kumabe, 
    and Ryota Shii for their discussion and encouragement.
    This work was supported by JST SPRING, Grant Number JPMJSP2136.
\end{ac}

\bibliographystyle{plain}
\bibliography{bibs}

\end{document}